\documentclass[11pt,reqno]{amsart}
\usepackage{amssymb,mathtools,calc,verbatim,enumitem,tikz,url,hyperref,mathrsfs,cite,fullpage}
\usepackage{bbm}
\usepackage{stmaryrd}
\usepackage{textcomp}
\usepackage{setspace}
\usepackage{amsthm}
\usepackage{amsmath}
\usepackage{graphicx}
\usepackage{marvosym}
\usepackage{empheq}
\usepackage{latexsym}
\usepackage{fontenc}
\usepackage{color}

\usepackage{cleveref}
\usepackage{dsfont}

\usepackage{todonotes}

\newtheorem{theorem}{Theorem}[section]
\newtheorem{lemma}[theorem]{Lemma}

\newtheorem{proposition}[theorem]{Proposition}

\newtheorem*{claim*}{Claim}

\theoremstyle{definition}
\newtheorem{definition}[theorem]{Definition}
\newtheorem{question}[theorem]{Question}
\newtheorem*{qu*}{Question}
\theoremstyle{remark}
\newtheorem{remark}[theorem]{Remark}

\newcommand\N{\mathbb{N}}
\newcommand\E{\operatorname{\mathbb{E}}}

\renewcommand\Pr{\operatorname{\mathbb{P}}}

\renewcommand\leq{\leqslant}
\renewcommand\geq{\geqslant}
\renewcommand\le{\leqslant}
\renewcommand\ge{\geqslant}
\renewcommand\to{\rightarrow}

\def\R{\mathbb{R}}

\def\<{\langle }
\def\>{\rangle }

\newcommand{\wt}{\operatorname{wt}}

\begin{document}

\title{The sharp threshold for rainbow stackings of random edge-colourings}
\author{
Hong Liu
\and
Guorui Ma
\and 
Yangrui Xiang
\and
Zhifei Yan}

\address{ECOPRO, Institute for Basic Science, 55 Expo-ro, Yuseong-gu, Daejeon, 34126, Korea}\email{\{hongliu,zhifeiyan\}@ibs.re.kr}

\address{Shanghai Institute for Mathematics and Interdisciplinary Sciences (SIMIS), Shanghai, 200433, China; Research institute of Intelligent Complex Systems, Fudan University, Shanghai, 200433, China}\email{mgr18@tsinghua.org.cn}

\address{Department of Mathematics, Louisiana State University, 303 Lockett Hall, Baton Rouge, Louisiana, 70801, USA}\email{yxiang8@lsu.edu}

\thanks{}

\begin{abstract}
A rainbow stacking of $m$ independent, uniformly random $r$-edge-colourings of $K_n$ is a tuple of vertex permutations that superimposes the colourings such that no two edges of the same colour overlap. The study of the critical palette size $r$ required for the existence of such stackings was recently initiated by Alon, Defant, and Kravitz [Bull. Lond. Math. Soc., 57, 2025], who bounded the phase transition within a constant-order window around $\frac{m\binom{n}{2}}{2\log(n!)}$.

We determine the constant term in this transition.  For every fixed $m\ge2$ and every function $\omega(n)\to\infty$, with high probability there is no rainbow stacking if
\[
        r\le \frac{m\binom{n}{2}}{2\log(n!)}+\frac{2m-1}{6}-\frac{\omega(n)}{(\log n)^2},
\]
while with high probability there is one if
\[
        r\ge \frac{m\binom{n}{2}}{2\log(n!)}+\frac{2m-1}{6}+\frac{\omega(n)}{(\log n)^2}.
\]
Our proof combines a chromatic-polynomial expansion for an auxiliary conflict graph with a refined estimate of the associated weighted permutation sum.  Our result yields the exact threshold $\Big\lceil \frac{m\binom{n}{2}}{2\log(n!)}+\frac{2m-1}{6}\Big\rceil$ for a density-one set of integers $n$, resolving a problem of Alon, Defant and Kravitz.
\end{abstract}
	
\maketitle

\section{Introduction}
Let $\mathfrak{S}_{n}$ denote the symmetric group of permutations of the set $[n] := \{1, \dots, n\}$, and let $K_{n}$ denote the complete graph with vertex set $[n]$ and edge set $\binom{[n]}{2}$. For a palette $C_r$ of $r$ colours, let $\chi_{1}, \dots, \chi_{m} : \binom{[n]}{2} \to C_r$ be edge-colourings of $K_{n}$. A \emph{rainbow stacking} of $\chi_{1}, \dots, \chi_{m}$ is a tuple $\sigma = (\sigma_{1}, \dots, \sigma_{m}) \in \mathfrak{S}_{n}^{m}$ such that for each edge position $e \in \binom{[n]}{2}$, the colours
\[ \chi_{1}(\sigma_{1}^{-1}(e)), \dots, \chi_{m}(\sigma_{m}^{-1}(e)) \]
are all distinct. In other words, a rainbow stacking superimposes copies of $K_n$ such that no edge is stacked above another edge of the same colour.

The study of rainbow stackings was recently initiated by Alon, Defant, and Kravitz \cite{ADK}, who investigated the critical palette size $r$ required to guarantee the existence of a rainbow stacking when $\chi_{1}, \dots, \chi_{m}$ are independent uniformly random edge-colourings. Fixing the number of layers $m \ge 2$ and letting $n \to \infty$, they established that the phase transition lies within a window around the generic entropy bound $\frac{m\binom{n}{2}}{2\log(n!)}$.
Specifically, they proved that for any function $\omega(n) \to \infty$, a rainbow stacking does not exist with high probability if 
$$r \le \frac{m\binom{n}{2}}{2\log(n!)} - \frac{\omega(n)}{(\log n)^{2}}.$$
Conversely, applying the second moment method, they established existence with high probability provided
\[ r \ge \frac{m\binom{n}{2}}{2\log(n!)} + \frac{2m-1}{3} + \frac{m}{2\log n} + \frac{\omega(n)}{(\log n)^{2}}. \]

Previous bounds left a gap containing a constant term of $(2m-1)/3$ and an $O(1/\log n)$ term. The first issue is due to the difficulty of capturing dependencies among local colour conflicts when using general entropy bounds. The second issue is tied to bounding the second moment over highly dependent overlaps. To address these issues, we introduce modifications to both steps, which allow us to determine the sharp threshold.

Our first contribution handles the local obstruction. For a relative permutation tuple $\pi$, we encode the simultaneous rainbow conditions for the identity stacking and the $\pi$-stacking as a proper-colouring problem on an auxiliary bounded-degree graph $G_\pi$.  We can then apply a cluster expansion (Fadnavis's second-order asymptotic expansion \cite{Fadnavis}) for its chromatic polynomial. Its edge term gives the first-order collision contribution, while its triangle term records exactly the local correlations among overlapping constraints. A sharp triangle count in $G_\pi$ yields the correction $\exp\left(\frac{\wt(\pi)}{r-(2m-1)/6}\right)$,
which is the essential reason behind the constant $(2m-1)/6$ (see Section~\ref{sec:bound_chromatic}).

Our second contribution handles the global summation.  We decouple the $\binom m2$ pairwise relative-permutation overlaps by extracting a Kruskal maximum spanning tree on the layer set.  This reduces the variance estimate to $d=m-1$ ordered tree weights, but the near-identity endpoint remains the delicate part.  We keep the finer defect data (the number of moved vertices and the number of transpositions) rather than summing only over fixed two-sets.  In the near-identity regime, the Kruskal ordering forces a monotone ordering of the moved-vertex defects; the resulting positive suffix sums eliminate the previous $O(1/\log n)$ loss (see Section~\ref{sec:weighted_permutation_sum}). 

Combining these estimates gives the following threshold localization.

\begin{theorem}[Sharp threshold for rainbow stacking] \label{thm:main_stacking}
Fix an integer $m \ge 2$, and let $\omega:\N \to \R$ be any function satisfying $\omega(n)\to\infty$. For each $n$, let
$\chi_{1},\dots,\chi_{m} : \binom{[n]}{2} \to C_r$
be independent uniformly random $r$-edge-colourings of $K_{n}$.
\begin{enumerate}
    \item[\rm (a)] If 
    \[ r \le \frac{m\binom{n}{2}}{2\log(n!)} + \frac{2m-1}{6} - \frac{\omega(n)}{(\log n)^{2}}, \]
    then whp there is no rainbow stacking of $\chi_{1},\dots,\chi_{m}$.
    
    \item[\rm (b)] If 
    \[ r \ge \frac{m\binom{n}{2}}{2\log(n!)} + \frac{2m-1}{6} + \frac{\omega(n)}{(\log n)^{2}}, \]
    then whp there is a rainbow stacking of $\chi_{1},\dots,\chi_{m}$.  
\end{enumerate}
\end{theorem}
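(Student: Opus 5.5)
We let $X$ be the number of rainbow stackings, normalised by fixing $\sigma_1=\mathrm{id}$. We prove (a) by a first moment bound and (b) by the second moment method, following the two contributions described in the introduction.

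\textbf{Part (a).} The naive first moment is
\[
\E X=(n!)^{m-1}\bigl((r)_m/r^m\bigr)^{\binom n2},
\qquad
\log\bigl((r)_m/r^m\bigr)=-\binom m2/r-\tfrac{(m-1)m(2m-1)}{12r^2}+O(r^{-3}).
\]
With $r\approx m\binom n2/(2\log n!)\asymp n/\log n$, this only gives the threshold $\frac{m\binom n2}{2\log n!}$, without the $\frac{2m-1}{6}$ shift. The shift must therefore come from a second-order effect in the count itself. Write $r_0=\frac{m\binom n2}{2\log n!}$ and set $\rho=r-\frac{2m-1}{6}$. The key identity is
\[
\binom m2\Big/\rho\;\approx\;\binom m2\Big/r+\frac{(m-1)m(2m-1)}{12r^2}.
\]
Hence $\E X\approx(n!)^{m-1}\exp\bigl(-\binom m2\binom n2/\rho\bigr)$. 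Choosing $\rho\le r_0-\omega/(\log n)^2$ gives an exponent of order $-\omega\,\binom n2/(r_0^2(\log n)^2)\cdot\binom m2$, which tends to $-\infty$. The $(\log n)^{-2}$ window matches the sensitivity $\partial_r\log\E X\asymp n^2/r^2\asymp(\log n)^2$. So the entire constant $\frac{2m-1}{6}$ is already encoded in the exact first moment, and (a) follows from Markov's inequality. We will verify the Taylor bookkeeping carefully, since the $r^{-2}$ term is what produces the shift.

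\textbf{Part (b).} We compute
\[
\E X^2=(n!)^{m-1}\sum_{\pi}\Pr[\mathrm{id}\text{ and }\pi\text{ both rainbow}],
\]
where $\pi=(\pi_2,\dots,\pi_m)$ ranges over relative permutation tuples. This gives $\E X^2/(\E X)^2=(n!)^{-(m-1)}\sum_\pi R(\pi)$. For each $\pi$, the joint event is a proper-colouring condition on the conflict graph $G_\pi$. Its vertices are the edges of all layers, with cliques $K_m$ at the edge positions of both stackings. The probability equals $P(G_\pi,r)/r^{m\binom n2}$, where $P$ is the chromatic polynomial.

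We apply Fadnavis's expansion. Since $G_\pi$ has bounded degree,
\[
\log P(G_\pi,r)=|V|\log r-\frac{|E(G_\pi)|}{r}-\Bigl(\frac{|E|}{2}-\#\triangle\Bigr)\frac{1}{r^2}+\dots.
\]
The edge count is $2\binom m2\binom n2-\wt(\pi)$, where $\wt(\pi)$ counts coincident pairs, that is, positions where two layers overlap in both stackings. Counting triangles exactly, the coincident edges lose triangles in a controlled way. Dividing by $(\E X)^2$ yields
\[
R(\pi)\le\exp\Bigl(\frac{\wt(\pi)}{r-(2m-1)/6}(1+o(1))\Bigr),
\]
as the introduction states, with the $o(1)$ controlled uniformly in $\pi$.

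It remains to show
\[
(n!)^{-(m-1)}\sum_\pi\exp\bigl(\wt(\pi)/\rho\bigr)\to1
\qquad\text{when }\rho\ge r_0+\omega/(\log n)^2.
\]
Here $\wt(\pi)=\sum_{i<j}|\mathrm{Fix}_2(\pi_i^{-1}\pi_j)|$, the number of edges fixed by the pairwise relative permutations. We bound this by a maximum spanning tree. Order the pairs by overlap, pick a Kruskal spanning tree $T$ on $[m]$, and bound every non-tree overlap by the tree overlaps. This reduces the sum to a product of $m-1$ single-permutation sums of the form
\[
\sum_{\tau\in\mathfrak S_n}\exp\bigl(c\,\mathrm{fix}_2(\tau)/\rho\bigr).
\]
We classify $\tau$ by the number $k$ of moved vertices and the number $t$ of transpositions. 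In that case
\[
\mathrm{fix}_2(\tau)=\binom{n-k}{2}+t+(\text{pairs inside cycles}).
\]
There are three regimes. For $k\ge n-o(n)$, the sum is dominated by derangement-like terms, contributing $n!(1+o(1))$. For intermediate $k$, the term $\exp(\binom{n-k}2/\rho)\binom nk k!$ is small because $\rho>r_0$. In the near-identity regime $k=o(n)$, the exponent is $\binom n2/\rho-kn/\rho$, and the comparison with $\log n!$ uses exactly the $\omega/(\log n)^2$ margin.

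\textbf{Main obstacle.} The hard step is the near-identity endpoint across several tree edges simultaneously. There the Kruskal ordering forces the moved-vertex counts $k_1\ge k_2\ge\dots$ to be monotone. We would exploit this by rewriting the exponent through positive suffix sums $\sum_{j\ge i}k_j$. Each suffix then pays its own factor $n^{-\Omega(\omega)}$ rather than a lossy $\exp(m/(2\log n))$. This makes the total contribution $o(1)$, after which Paley–Zygmund or Chebyshev gives $X>0$ whp.
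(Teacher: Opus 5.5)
Part (a) is correct and follows the paper's argument. Normalising $\sigma_1=\mathrm{id}$ is the same as the paper's observation that $Z$ is a multiple of $n!$; you should add the truncation of $\omega$ and monotonicity in $r$ so the Taylor expansions are uniform.

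The gap is in (b), in the local estimate that is supposed to produce $\frac{2m-1}{6}$. You do not derive it but quote it from the introduction, and the one formula you write down has the wrong sign on the triangle term. Fadnavis's expansion is $\log P_G(q)=v\log q-\frac{e}{q}-\frac{e/2+T}{q^2}+O_\Delta(v/q^3)$; for $K_3$, $\log(1-\frac{1}{q})+\log(1-\frac{2}{q})=-\frac{3}{q}-\frac{5}{2q^2}+\cdots$. With your sign, the $N/r^2$ terms would not cancel against $2\log E_{n,m,r}$. For typical $\pi$, where $T_\pi\approx T_0:=2N\binom{m}{3}$, you would get $P_{G_\pi}(r)/(r^{mN}E_{n,m,r}^2)\approx\exp(2T_0/r^2)=\exp(\Theta((\log n)^2))$ when $m\ge3$, and the second moment would blow up.

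With the correct sign, triangles \emph{decrease} $P$, so an upper bound on $P_{G_\pi}$ needs a \emph{lower} bound on $T_\pi$. The quantitative content behind ``coincident edges lose triangles in a controlled way'' must be $T_\pi\ge T_0-\frac{m-2}{3}\wt(\pi)$. This holds for three reasons:
\begin{itemize}
\item the $N$ identity cliques and the $N$ $\pi$-cliques carry $T_0$ triangles;
\item a triangle lies in both families only at a triple coincidence $\pi_a(e)=\pi_b(e)=\pi_c(e)$, and each such triple uses three pair coincidences;
\item each pair coincidence extends to at most $m-2$ triples.
\end{itemize}
The coefficient of $\wt(\pi)/r^2$ is then $\frac12+\frac{m-2}{3}=c_m$, which is where the constant actually comes from. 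Also, the bound must read $R(\pi)\le(1+o(1))\exp\bigl(\wt(\pi)/(r-c_m)\bigr)$, with an \emph{additive} $o(1)$ in the exponent (the error is $O(\wt(\pi)/r^3)=O(N/r^3)$). Your form $\exp\bigl(\frac{\wt(\pi)}{r-c_m}(1+o(1))\bigr)$ is useless near the identity, where $\wt(\pi)/\rho\asymp n\log n$ but the available margin is only of order $\omega$.

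The second problem is in the permutation sum. Write $N=\binom{n}{2}$ and $L=\log(n!)$. The Kruskal step does not reduce the sum to a product of $m-1$ independent sums $\sum_\tau\exp(c\,\mathrm{fix}_2(\tau)/\rho)$. The factor with multiplier $j$ would contain the identity term $\exp(jN/\rho)/n!=\exp\bigl((\frac{2j}{m}-1+o(1))L\bigr)$, which diverges for $j>m/2$, hence for every $m\ge3$. The ordering $K_1\ge\cdots\ge K_{m-1}$ of the Kruskal weights, with $K_j$ carrying multiplier $j$ in $\wt(\pi)\le\sum_j jK_j$, must be kept inside the sum over tree data in every regime. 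That includes the non-endpoint regime, which your single-permutation three-regime sketch does not address.

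In the near-identity regime, the ordering gives $s_1\le\cdots\le s_{m-1}$ for the moved-vertex counts of the edges selected first, second, and so on. That is the opposite of your $k_1\ge k_2\ge\cdots$ under this labelling. What must be positive are the suffix sums $\sum_{j\ge a}b_j\sim\frac{(m-a)(m+2a-2)}{m}\log n$ of the rates $b_j=\frac{j(n-1/2)}{\rho}-\log n-O(1)$, obtained by writing $s_j=u_1+\cdots+u_j$. The individual $b_j$ are negative for small $j$ once $m\ge5$, so the substitution is needed. Each unit increment then costs only $n^{-\Omega(1)}$ and the defect sum is $O_m(1)$. The whole $\omega$-gain comes from the single endpoint term $\exp\bigl(\binom{m}{2}N/\rho-(m-1)L\bigr)\to0$, not from an $n^{-\Omega(\omega)}$ factor per suffix. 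Finally, a setwise-fixed pair is either two fixed points or a $2$-cycle, so $\mathrm{fix}_2(\tau)=\binom{n-k}{2}+t$ exactly; there is no ``pairs inside cycles'' term.
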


As $n\to\infty$, the width of the unresolved window tends to $0$. Consequently, for all sufficiently large $n$, there is at most one integer lying between the non-existence and existence bounds. Thus Theorem~\ref{thm:main_stacking} determines the exact integer threshold whenever
\[
r_0(n):=\frac{m\binom{n}{2}}{2\log(n!)}+\frac{2m-1}{6}
\]
is not within $\omega(n)/(\log n)^2$ of an integer. In particular, for a density-one set of integers $n$ (and hence infinitely many $n$), the threshold is exactly $\lceil r_0(n)\rceil$, completely resolving the exact threshold question posed in~\cite{ADK}.

The remainder of this paper is organized as follows. In Section~\ref{sec:first_moment}, we present the first-moment obstruction and establish the non-existence threshold (Theorem~\ref{thm:main_stacking}(a)). In Section~\ref{sec:second_moment}, we set up the second moment method by defining the auxiliary graph $G_\pi$ and formulating the two core technical bounds (Propositions~\ref{prop:N_pi_final_bound} and \ref{prop:weighted_permutation_sum}). This allows us to swiftly deduce the existence threshold (Theorem~\ref{thm:main_stacking}(b)). The proof of the first core bound, evaluating the chromatic polynomial via cluster expansion, is carried out in Section~\ref{sec:bound_chromatic}. In Section~\ref{sec:weighted_permutation_sum}, we prove the second core bound by estimating the highly dependent weighted permutation sum. Finally, concluding remarks are discussed in Section~\ref{sec:conclusion}.

\section{First moment and non-existence}\label{sec:first_moment}

In this section, we apply the first moment method to prove the non-existence threshold of Theorem~\ref{thm:main_stacking}(a). For convenience, throughout the paper we define the following parameters:
\[ N = \binom{n}{2}, \quad L = \log(n!), \quad R_{n} = \frac{mN}{2L}, \quad \text{and} \quad c_{m} = \frac{2m-1}{6}. \]

For each tuple $\sigma = (\sigma_1, \dots, \sigma_m) \in S_n^m$, let $Z_\sigma$ denote the indicator random variable of the event that $\sigma$ forms a rainbow stacking. Let 
$$Z = \sum_{\sigma \in S_n^m} Z_\sigma$$
be the random variable counting the total number of rainbow stackings. For a fixed edge position $e$ and a fixed permutation tuple $\sigma$, the random colours $\chi_1(\sigma_1^{-1}(e)), \dots, \chi_m(\sigma_m^{-1}(e))$ are independent and uniformly distributed in $C_r$. The probability that these $m$ colours are pairwise distinct is:
\[ \frac{r(r-1)\dots(r-m+1)}{r^m} = \prod_{i=1}^{m-1} \left(1 - \frac{i}{r}\right). \]
Since the colour assignments on different underlying edges are completely independent, the conditions for different final edge positions $e \in \binom{[n]}{2}$ are mutually independent. Therefore, for any fixed $\sigma \in S_n^m$, we have:
\begin{equation} \label{eq:EZ_sigma}
\E Z_\sigma = E_{n,m,r} := \prod_{i=1}^{m-1} \left(1 - \frac{i}{r}\right)^N.
\end{equation}
Summing over all $(n!)^m$ possible choices for $\sigma$, the expected number of rainbow stackings is:
\begin{equation} \label{eq:EZ_total}
\E Z = (n!)^m E_{n,m,r}.
\end{equation}

Notice there exists an inherent diagonal symmetry acting on the space of stackings. For any common re-labelling permutation $\tau \in S_n$, we define the shifted tuple as $\tau\sigma = (\tau\sigma_1, \dots, \tau\sigma_m)$. We first verify that this diagonal action is free. If $\tau\sigma_i = \tau'\sigma_i$ for every $i \in [m]$, then multiplying on the right by $\sigma_i^{-1}$ yields $\tau = \tau'$. Thus, every diagonal orbit has exactly $n!$ distinct tuples.

Furthermore, the rainbow property is invariant under this action. Indeed, for any edge $e$, the colour assigned by the $i$-th layer in the shifted stacking is $\chi_i((\tau\sigma_i)^{-1}(e)) = \chi_i(\sigma_i^{-1}(\tau^{-1}(e))).$
As $e$ ranges over $\binom{[n]}{2}$, so does the mapped edge $\tau^{-1}(e)$. Hence, the set of superimposed colours across the entire graph remains unchanged. Therefore, $\tau\sigma$ is a rainbow stacking if and only if $\sigma$ is.

Since every rainbow stacking belongs to a full diagonal orbit of size $n!$, the random variable $Z$ is always a multiple of $n!$. Therefore, if at least one rainbow stacking exists, its entire diagonal orbit of size $n!$ exists, which implies that $Z > 0 \implies Z \ge n!$. This rigidity allows us to apply Markov's inequality in the following sharp form:
\begin{equation} \label{eq:markov_sharp}
\Pr[Z > 0] = \Pr[Z \ge n!] \le \frac{\E Z}{n!}.
\end{equation}

We now formally establish the non-existence.

\begin{proof}[Proof of Theorem~\ref{thm:main_stacking}\textup{(a)}]
Let us replace $\omega(n)$ with the truncated function $\omega_0(n) = \min\{\omega(n), \log n\}$. Clearly, $\omega_0(n) \to \infty$ but $\omega_0(n) = O(\log n)$, which justifies the uniform validity of our subsequent Taylor expansions. Since increasing the palette size $r$ can only increase the probability of a configuration being rainbow, the upper bound case 
\[ r = R_n + c_m - \eta_n, \quad \text{where } \eta_n = \frac{\omega_0(n)}{(\log n)^2}, \]
is the hardest to prove. Taking the logarithm of \eqref{eq:EZ_total} scaled by $n!$, we have:
\begin{equation} \label{eq:log_first_moment}
\log \frac{\E Z}{n!} = (m-1)L + N \sum_{i=1}^{m-1} \log\left(1 - \frac{i}{r}\right).
\end{equation}
Since $r = \Theta\left(\frac{n}{\log n}\right)$, we have $N/r^3 = O((\log n)^3 / n) = o(1)$. Applying the Taylor expansion $\log(1-x) = -x - \frac{x^2}{2} + O(x^3)$ to \eqref{eq:log_first_moment} yields:
\begin{align}
\log \frac{\E Z}{n!} &= (m-1)L - \frac{N}{r}\sum_{i=1}^{m-1} i - \frac{N}{2r^2}\sum_{i=1}^{m-1} i^2 + O_m\left(\frac{N}{r^3}\right) \notag \\
&= (m-1)\left(L - \frac{mN}{2r} - \frac{m(2m-1)N}{12r^2}\right) + o(1). \label{eq:taylor_expanded}
\end{align}
We now analyze the expression inside the parentheses at $r = R_n + d$, where $d = c_m - \eta_n = O(1)$. By the definition of $R_n$, we have $\frac{mN}{2R_n} = L$. Uniformly for any bounded $d$, we expand the first-order term:
\begin{equation} \label{eq:exp_part1}
\frac{mN}{2(R_n + d)} = L \left(1 + \frac{d}{R_n}\right)^{-1} = L - \frac{Ld}{R_n} + o(1).
\end{equation}
Similarly, for the second-order term, we obtain:
\begin{equation} \label{eq:exp_part2}
\frac{m(2m-1)N}{12(R_n+d)^2} = \frac{m(2m-1)N}{12R_n^2} + o(1) = \frac{c_m L}{R_n} + o(1),
\end{equation}
where the identity $\frac{m(2m-1)N}{12R_n^2} = \frac{c_m L}{R_n}$ can be checked directly by substituting $R_n = \frac{mN}{2L}$. 

Inserting \eqref{eq:exp_part1} and \eqref{eq:exp_part2} into \eqref{eq:taylor_expanded} with $d = c_m - \eta_n$, the constant terms $c_m$ cancel precisely:
\begin{align*}
L - \frac{mN}{2r} - \frac{m(2m-1)N}{12r^2} &= L - \left(L - \frac{L(c_m - \eta_n)}{R_n}\right) - \frac{c_m L}{R_n} + o(1)= -\frac{\eta_n L}{R_n} + o(1).
\end{align*}
Finally, we compute the ratio $\frac{L}{R_n} = \frac{2L^2}{mN}$. Note that $L \sim n\log n$ and $N \sim n^2/2$, we have
\[ \frac{\eta_n L}{R_n} = \left(\frac{4}{m} + o(1)\right) \omega_0(n) \to \infty, \]
since $\frac{L}{R_n} = \left(\frac{4}{m} + o(1)\right) (\log n)^2$. Thus, $\log \frac{\E Z}{n!} \to -\infty$, which means $\E Z / n! \to 0$. By \eqref{eq:markov_sharp}, we have $\Pr[Z > 0] \to 0$ as $n \to \infty$.
\end{proof}

\medskip

\section{Second moment method and the existence threshold}\label{sec:second_moment}

To establish the existence threshold, we employ the second moment method. By vertex-relabeling invariance, the joint expectation $\E[Z_\sigma Z_\tau]$ depends only on the relative permutation tuple $\pi = (\sigma_1\tau_1^{-1},\ldots,\sigma_m\tau_m^{-1})$. Fixing the second stacking to the identity $\mathrm{id} = (\mathrm{id},\ldots,\mathrm{id})$, we can expand the second moment as:
\begin{equation}\label{eq:second_moment_reduction}
        \E Z^2=(n!)^m\sum_{\pi\in\mathfrak S_n^m} \E[Z_{\mathrm{id}}Z_\pi].
\end{equation}

To compute $\E[Z_{\mathrm{id}}Z_\pi]$, we encode the simultaneous rainbow condition of $\mathrm{id}$ and $\pi$ as a proper vertex-colouring problem on an auxiliary graph $G_\pi$.

\begin{definition}[Auxiliary Graph $G_\pi$]\label{def:aux_graph}
For a fixed $\pi\in\mathfrak S_n^m$, the graph $G_\pi$ is defined as follows:
\begin{itemize}
    \item \textbf{Vertices:} Let $\beta_k(e)$ represents the edge $e$ within the $k$-th colouring layer.
    $$V(G_\pi)=\left\{\beta_k(e): k\in[m],\ e\in\binom{[n]}2\right\}$$ 
    Note the total number of vertices is $mN$.
    
    \item \textbf{Edges:} There are no edges within the same layer. For distinct layers $k\neq \ell$, vertices $\beta_k(e)$ and $\beta_\ell(e')$ are adjacent if and only if they map to the same position in either the $\mathrm{id}$-stacking or the $\pi$-stacking. That is, if:
    \begin{enumerate}[label=\textup{(\roman*)}]
        \item $e=e'$ \quad (colour conflict in the $\mathrm{id}$-stacking), or
        \item $\pi_k(e)=\pi_\ell(e')$ \quad (colour conflict in the $\pi$-stacking).
    \end{enumerate}
\end{itemize}
\end{definition}

Assigning each vertex $\beta_k(e)$ its independent random colour $\chi_k(e)$ yields a uniformly random $r$-colouring of $G_\pi$. The stackings $\mathrm{id}$ and $\pi$ are simultaneously rainbow if and only if this colouring is proper. Counting the valid colourings via the chromatic polynomial $P_{G_\pi}(r)$, we have
\begin{equation}\label{eq:chromatic_prob_link}
\E[Z_{\mathrm{id}}Z_\pi] =r^{-mN}P_{G_\pi}(r).
\end{equation}

To capture the overlaps between permutations, we define the pairwise and total collision weights. Let $\tau_{k\ell}=\pi_k^{-1}\pi_\ell\in\mathfrak S_n$. Let $f_{k\ell}(\pi)$ and $t_{k\ell}(\pi)$ denote the number of fixed points and transpositions of $\tau_{k\ell}$, respectively. We define:
\begin{equation}\label{eq:weight_def}
        \wt_{k\ell}(\pi)=\binom{f_{k\ell}(\pi)}2+t_{k\ell}(\pi),
        \qquad
        \wt(\pi)=\sum_{1\leq k<\ell\leq m}\wt_{k\ell}(\pi).
\end{equation}
Notice that $\wt_{k\ell}(\pi)$ is the number of edges $e\in\binom{[n]}2$ fixed set-wise by the vertex permutation $\tau_{k\ell}$.  Such an edge is either formed by two fixed vertices of $\tau_{k\ell}$ or is the support of a transposition of $\tau_{k\ell}$.

To control the probability in \eqref{eq:chromatic_prob_link}, we establish the following tight upper bound on the chromatic polynomial. This strictly strengthens Lemma 2.1 in \cite{ADK}  by improving the exponent, and its proof is deferred to Section~\ref{sec:bound_chromatic}.

\begin{proposition}\label{prop:N_pi_final_bound}
Assume $N/r^3=o(1)$.  Uniformly for all $\pi\in\mathfrak S_n^m$,
\begin{equation}\label{eq:N_pi_formula}
        P_{G_\pi}(r) \leq (1+o(1))r^{mN}E_{n,m,r}^2 \exp\left(\frac{\wt(\pi)}{r-c_m}\right).
\end{equation}
\end{proposition}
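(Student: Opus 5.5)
We will bound $P_{G_\pi}(r)$ through the cluster (Mayer) expansion of the chromatic polynomial for bounded-degree graphs, in the second-order form of Fadnavis. Every vertex $\beta_k(e)$ of $G_\pi$ has degree at most $2(m-1)$, since it has at most $m-1$ neighbours from condition (i) and at most $m-1$ from condition (ii). Hence $r\gg\Delta$, and the expansion
\[
\log P_{G}(r)=|V|\log r-\frac{|E(G)|}{r}-\frac{|E(G)|}{2r^2}+\frac{T(G)}{r^2}+O\Big(\frac{|V|}{r^3}\Big)
\]
holds, where $T(G)$ is the number of triangles. The constants depend only on $m$. The first task is to check the exact normalisation of the second-order terms, including the $4$-cycle contribution at order $r^{-3}$. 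This can be done against a triangle: $\log(r(r-1)(r-2))=3\log r-3/r-\tfrac{3}{2r^2}+\dots$ equals $3\log r-3/r-3/(2r^2)-?$ plus $1/r^2$. We will calibrate the coefficients this way so that a disjoint union of $K_m$'s reproduces $\log\big(r^m\prod(1-i/r)\big)$. Since $|V|/r^3=mN/r^3=o(1)$, the error term is harmless.

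Next we count edges and triangles. $G_\pi$ is the union of $N$ cliques $K_m$ from the id-stacking and $N$ cliques $K_m$ from the $\pi$-stacking. An edge $\beta_k(e)\beta_\ell(e')$ lies in both clique families exactly when $e=e'$ and $\pi_k(e)=\pi_\ell(e)$, i.e. when $e$ is fixed setwise by $\tau_{k\ell}$. So $|E(G_\pi)|=2N\binom m2-\wt(\pi)$. Triangles inside the cliques number $2N\binom m3$, minus those counted twice. Every other triangle would have to use edges of both types on three different layers. Such a triangle $\beta_k(e),\beta_\ell(e),\beta_j(e')$ forces $\pi_k(e)=\pi_j(e')=\pi_\ell(e)$, hence is a doubly-covered configuration, so $T(G_\pi)\ge 2N\binom m3-(\text{overlap terms})$. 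Substituting, and using that the disjoint-clique value reproduces $r^{mN}E_{n,m,r}^2$ (two copies of $N$ disjoint $K_m$'s), we get
\[
\log\frac{P_{G_\pi}(r)}{r^{mN}E_{n,m,r}^2}\le \wt(\pi)\Big(\frac1r+\frac{1}{2r^2}\Big)-\frac{\Delta T}{r^2}+o(1),
\]
where $\Delta T$ is the triangle deficit caused by the merged edges.

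The key step, and the main obstacle, is the sharp triangle count. We must show that each merged edge (an edge fixed by some $\tau_{k\ell}$) destroys triangles in a way that makes the net second-order coefficient at most $c_m/r^2$. That is, we need
\[
\wt(\pi)\Big(\frac1r+\frac{c_m}{r^2}\Big)\le \frac{\wt(\pi)}{r-c_m}.
\]
Note that $\tfrac12-\tfrac{m-2}{?}$ must combine to $(2m-1)/6$. To handle this, we will fix a merged pair $\{\beta_k(e),\beta_\ell(e)\}$. The two cliques on positions $e$ (id) and $\pi_k(e)$ ($\pi$) share this edge, and the $m-2$ triangles through it from each clique are counted once each rather than being lost. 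We will account for the deficit per layer triple $\{k,\ell,j\}$ by inclusion–exclusion over how many of the three pairs are merged, weighting each merged edge by the number of triangles it participates in. We expect a local identity of the form $\Delta T\ge \alpha_m\wt(\pi)-O(\text{higher coincidences})$. Degenerate coincidences, where a triple is merged on all three pairs, would be bounded using consistency of the $\tau$'s ($\tau_{k\ell}\tau_{\ell j}=\tau_{kj}$), which only helps.

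Finally, the terms of order $\wt(\pi)/r^3$ are absorbed by the expansion $\frac1{r-c_m}=\frac1r+\frac{c_m}{r^2}+\frac{c_m^2}{r^3}+\dots$. Here we use $\wt(\pi)\le\binom m2 N$, so $\wt(\pi)/r^3=O(N/r^3)=o(1)$ uniformly in $\pi$. This yields \eqref{eq:N_pi_formula} with the $(1+o(1))$ factor uniform over $\mathfrak S_n^m$.
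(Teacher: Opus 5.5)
Your outline (Fadnavis's expansion, the edge count $2N\binom m2-\wt(\pi)$, a triangle count, and absorbing $\wt(\pi)/r^3=O(N/r^3)$) is the paper's. However, the triangle term in your expansion has the wrong sign, and this inverts the key step. Fadnavis's expansion reads $\log P_G(r)=v\log r-\frac er-\frac{e/2+T}{r^2}+O_\Delta(v/r^3)$. You can check this on $K_3$: $\log\bigl(r(r-1)(r-2)\bigr)=3\log r-\frac3r-\frac5{2r^2}+\cdots$. Also $\frac12\binom m2+\binom m3=\frac{m(m-1)(2m-1)}{12}$ is exactly what makes a disjoint union of $K_m$'s reproduce $\prod_i(1-i/r)$; with your sign this calibration fails.

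Consequently the triangle deficit $\Delta T=T_0-T(G_\pi)$, with $T_0=2N\binom m3$, enters as $+\Delta T/r^2$. What you need is an \emph{upper} bound $\Delta T\le\frac{m-2}3\wt(\pi)$, which gives the coefficient $\frac12+\frac{m-2}3=c_m$. The lower bound $\Delta T\ge\alpha_m\wt(\pi)$ you plan to look for is the wrong direction.

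Your displayed inequality is in fact false for $m\ge3$. Combined with $\Delta T\ge0$ it would bound the exponent by $\wt(\pi)\bigl(\frac1r+\frac1{2r^2}\bigr)+o(1)$. But take $\pi=(\mathrm{id},\dots,\mathrm{id})$: then $G_\pi$ is $N$ disjoint copies of $K_m$ and $\wt(\pi)=\binom m2N$. In this case $\log\frac{P_{G_\pi}(r)}{r^{mN}E_{n,m,r}^2}=-\log E_{n,m,r}=\frac{\wt(\pi)}r+\frac{c_m\wt(\pi)}{r^2}+o(1)$, and $(c_m-\frac12)\wt(\pi)/r^2\to\infty$.

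Beyond the sign, the triangle step is only announced (``we expect a local identity''), not proved. Once the direction is right it is short, and it needs no inclusion--exclusion over merged pairs or cocycle relations among the $\tau_{k\ell}$.
\begin{itemize}
\item Every triangle of $G_\pi$ has two edges of the same type sharing a vertex. So it lies in some id-clique $A_e=\{\beta_1(e),\dots,\beta_m(e)\}$ or some $\pi$-clique $B_f$.
\item It lies in at most one clique of each family, since each family partitions $V(G_\pi)$. Hence $T(G_\pi)=T_0-D$, where $D$ counts the triangles lying in both families.
\item These are exactly the triangles $\{\beta_a(e),\beta_b(e),\beta_c(e)\}$ with $\pi_a(e)=\pi_b(e)=\pi_c(e)$.
\item Each such triangle contains three pair collisions $\pi_k(e)=\pi_\ell(e)$, each counted once in $\wt(\pi)$. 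Each pair collision extends to at most $m-2$ such triangles, so $3D\le(m-2)\wt(\pi)$.
\end{itemize}
The diagonal example above attains equality here.
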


Based on this bound, we must show that the exponential collision weights do not asymptotically inflate the second-moment summation. This requires a delicate structural decoupling which we defer to Section~\ref{sec:weighted_permutation_sum}.

\begin{proposition}\label{prop:weighted_permutation_sum}
Let $m\ge2$ be fixed and let $\omega(n)\to\infty$.  If $\rho\ge R_n+\frac{\omega(n)}{(\log n)^2}$, then
\begin{equation}\label{eq:target_sum}
       \sum_{\pi\in\mathfrak S_n^m}
        \exp\left(\frac{\wt(\pi)}\rho\right)
        =(n!)^m\cdot(1+o(1)).
\end{equation}
\end{proposition}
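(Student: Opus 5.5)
Both sides of \eqref{eq:target_sum} are invariant under the diagonal action $\pi\mapsto(\tau\pi_1,\dots,\tau\pi_m)$, since $\wt(\pi)$ depends only on the relative permutations $\tau_{k\ell}=\pi_k^{-1}\pi_\ell$. I would therefore fix $\pi_1=\mathrm{id}$ and regard $\pi_2,\dots,\pi_m$ as independent uniform permutations. Writing $F(\tau)=\binom{f(\tau)}2+t(\tau)$ for the number of edges fixed set-wise by $\tau$, the claim becomes
\[
\E\exp\Big(\tfrac1\rho\sum_{k<\ell}F(\tau_{k\ell})\Big)=1+o(1).
\]
The lower bound is trivial because $\wt\ge0$, so only the upper bound needs work.

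\textbf{Step 1: reduction to a tree.} For each $\pi$, run Kruskal's algorithm on $K_m$ with edge weights $\wt_{k\ell}(\pi)$, breaking ties by a fixed rule. This gives a maximum spanning tree $T$ with tree weights $w_1\ge w_2\ge\dots\ge w_{m-1}$ in the order they are added. By the cycle property, every non-tree pair $k\ell$ has weight at most the minimum tree weight on the $T$-path joining $k$ and $\ell$. Hence each non-tree pair can be charged to the Kruskal step $j$ that first merges the components of $k$ and $\ell$, and that step has $w_j\ge\wt_{k\ell}$. This yields
\[
\wt(\pi)\le\sum_{j=1}^{m-1}a_j w_j ,
\]
where $a_j$ is the number of pairs merged at step $j$ and $\sum_j a_j=\binom m2$.

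There are only $O_m(1)$ labelled trees, Kruskal orders and merge patterns, so I would take a union bound over them. For a fixed tree, the map $\pi\mapsto(\tau_e)_{e\in T}$ (with $\pi_1=\mathrm{id}$) is a bijection onto $\mathfrak S_n^{m-1}$. So it suffices to bound
\[
(n!)^{-(m-1)}\sum_{\tau_1,\dots,\tau_{m-1}}\mathbb 1\big[F(\tau_1)\ge\dots\ge F(\tau_{m-1})\big]\exp\Big(\sum_j a_jF(\tau_j)/\rho\Big).
\]
The term in which every $\tau_j$ is the identity must appear exactly once, not once per tree. I would handle this by separating out the event ``all $\pi_k$ equal'' before taking the union bound.

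\textbf{Step 2: far from the identity.} If $f(\tau)\le(1-\varepsilon)n$, then $F(\tau)/\rho\le f^2(\log n)/(cn)+O(n/\log n)$. Comparing with $\Pr[f(\tau)\ge f]\le1/f!$, and using the contribution of $t(\tau)$, which is Poisson-like and costs only $O(1)$, these $\tau_j$ contribute $1+o(1)$ when taken alone. They also contribute a negligible amount when multiplied against anything, because the weights $a_j$ are bounded and $\varepsilon$ can be chosen small in terms of $m$. The near-identity regime is the real issue. There, $\tau_j$ moves $s_j\le\varepsilon n$ vertices, and $F(\tau_j)=\binom{n-s_j}2+O(s_j)$. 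The number of such permutations is at most $n^{s_j}$.

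The totally-identity configuration contributes
\[
\exp\big(\tbinom m2N/\rho-(m-1)L\big)=\exp\big(-(m-1)(L-\tfrac{mN}{2\rho})\big).
\]
Since $L-\frac{mN}{2\rho}\ge \frac{L}{R_n}\cdot\frac{\omega}{(\log n)^2}(1+o(1))=\Theta(\omega)\to\infty$, this is $o(1)$. Any configuration in which every $\tau_j$ is near the identity is then bounded by this quantity times
\[
\prod_j n^{s_j}\exp\Big(-\tfrac{n}{\rho}\sum_i a_i s_i\Big).
\]

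\textbf{Step 3: the main obstacle.} The ordering forces $s_1\le s_2\le\dots\le s_{m-1}$. I would use this to rewrite the sum $\sum_i a_i s_i$ in terms of increments $s_j-s_{j-1}$ multiplied by the suffix sums $A_j=\sum_{i\ge j}a_i$. Since $n/\rho\approx 2L/(mn)\approx\frac{2}{m}\log n$, the increment $s_j-s_{j-1}$ appears in $\prod_j n^{s_j}$ with exponent $(m-j)\log n$ and in the penalty with exponent $\frac2m A_j\log n$.

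The point I expect to be delicate is verifying $\frac2mA_j>m-j$ for every possible merge pattern. This is where the previous $O(1/\log n)$ loss is eliminated. I believe it follows from the fact that the last $m-j$ Kruskal steps must merge at least about $(m-j)m/2$ pairs: the final $m-j$ merges join components that together cover all vertices. If $\frac2mA_j>m-j$ holds with strict margin, the geometric sums over the $s_j$ are $1+o(1)$. Mixed regimes, with some $\tau_j$ near the identity and others far from it, then follow by combining Steps 2 and 3.
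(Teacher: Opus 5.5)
Your near-identity mechanism (the Kruskal order forcing $s_1\le\cdots\le s_{m-1}$, then summing over increments weighted by suffix sums) is exactly the paper's key step. Your merge-pattern coefficients $a_j$ are a harmless refinement of the paper's uniform bound $\wt(\pi)\le\sum_j jK_j$, since $\sum_{i\le s}a_i\le\binom{s+1}2$.

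\textbf{The union bound overcounts the wrong term.} The all-identity configuration is $o(1)$, so counting it once per tree costs nothing. What must be counted only once is the bulk of typical $\pi$, whose pair weights are $O(1)$. Once you relax to all $(\tau_e)\in\mathfrak S_n^{m-1}$ subject only to $F(\tau_1)\ge\cdots\ge F(\tau_{m-1})$, each (tree, order) pair contributes at least $\Pr[F(\tau_1)\ge\cdots\ge F(\tau_{m-1})]\ge 1/(m-1)!$. Summing over the $m^{m-2}(m-1)!$ pairs therefore gives at least $m^{m-2}$, not $1+o(1)$, whenever $m\ge3$. Separating out ``all $\pi_k$ equal'' does not fix this. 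You need the paper's preliminary split. If the largest pair weight is at most $K_0=\binom{\lceil\log n\rceil}2$, then $\wt(\pi)\le\binom m2K_0$, and this part is at most $\exp(\binom m2K_0/\rho)=1+o(1)$ with no tree relaxation. Only the complement goes through the tree sum, and there every piece must be shown to be $o(1)$.

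\textbf{Step 2 fails as stated.} The claim that a far $\tau_j$ contributes $1+o(1)$ on its own is false once its coefficient exceeds $m/2$. Already for $m=3$ you have $a_2=2$ and, at $\rho\approx R_n$, $1/\rho\approx\frac{4\log n}{3n}$. Taking $f=(1-\varepsilon)n$ gives $\exp(2\binom f2/\rho)/f!=\exp\bigl((1-\varepsilon)(\tfrac43(1-\varepsilon)-1+o(1))n\log n\bigr)$, which is exponentially large for $\varepsilon<1/4$; shrinking $\varepsilon$ makes it worse, not better. Such a $\tau_j$ can only be controlled jointly with $\tau_1,\dots,\tau_{j-1}$, which by the ordering fix at least as many edges. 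Their factorial costs win through the prefix deficit $\sum_{i\le r}a_iN/\rho-rL\le\binom{r+1}2\frac{2L}m-rL=-\frac{r(m-r-1)}mL$ for $r<m-1$. The paper implements this with two tools. One is the majorant $B(K)\le C/\Gamma(h(K)+1)$. The other is a quasi-convexity argument that pushes maximizers of $\sum_j\bigl(\frac j\rho\binom{x_j}2-\log\Gamma(x_j+1)\bigr)$ over the ordered region to vectors with entries in $\{n,z,x_0,1\}$. ``Combining Steps 2 and 3'' does not supply this.

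\textbf{A factor-two slip in Step 3.} Since $N\approx n^2/2$, you have $n/\rho\approx\frac{4L}{mn}\approx\frac4m\log n$, not $\frac2m\log n$. With your value, the inequality $\frac2mA_j>m-j$ is an equality at $j=1$ (because $A_1=\binom m2$), so the increment $s_1$ would carry no decay. With the correct value the condition is $\frac4mA_j>m-j$. This follows with room from $A_j\ge\binom m2-\binom j2=\frac{(m-j)(m+j-1)}2$, matching the paper's suffix bound $\frac{(m-a)(m+2a-2)}m\log n$. So the verification you flagged as delicate is easy; the real work lies in the far and mixed regimes.
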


Note that Proposition 2.2 in \cite{ADK} establishes the same estimate as Proposition~\ref{prop:weighted_permutation_sum}, except that their assumption on $\rho$ is restricted to a narrower range:
\[
\rho\ge R_n+\frac{m}{\log n}+\frac{\omega(n)}{(\log n)^2}.
\]

Assuming the bounded chromatic estimate (Proposition~\ref{prop:N_pi_final_bound}) and the decoupled weighted permutation sum (Proposition~\ref{prop:weighted_permutation_sum}), the proof of the existence threshold is immediate.

\begin{proof}[Proof of Theorem~\ref{thm:main_stacking}\textup{(b)}]
Assume $r\geq R_n+c_m+\frac{\omega(n)}{(\log n)^2}$. Set $\rho=r-c_m$. Then $\rho$ satisfies the refined hypothesis of Proposition~\ref{prop:weighted_permutation_sum}. Since $r=\Theta(n/\log n)$, we have $N/r^3=o(1)$. By \eqref{eq:second_moment_reduction}, \eqref{eq:chromatic_prob_link}, and Proposition~\ref{prop:N_pi_final_bound},
\begin{align*}
        \E Z^2
        &=(n!)^m\sum_{\pi\in\mathfrak S_n^m}r^{-mN}P_{G_\pi}(r)\\
        &\leq (1+o(1))(n!)^mE_{n,m,r}^2
        \sum_{\pi\in\mathfrak S_n^m}
        \exp\left(\frac{\wt(\pi)}{r-c_m}\right)\\
        &=(1+o(1))(n!)^mE_{n,m,r}^2\cdot (n!)^m S_n(r-c_m)\\
        &=(1+o(1))(\E Z)^2,
\end{align*}
where the last equality uses \eqref{eq:EZ_total} and the strict sum bound $S_n(r-c_m) = 1+o(1)$ from Proposition~\ref{prop:weighted_permutation_sum}.  Since $\E Z^2\geq(\E Z)^2$, we have $\E Z^2=(1+o(1))(\E Z)^2$. The Paley--Zygmund inequality yields 
$$\Pr(Z>0)\geq \frac{(\E Z)^2}{\E Z^2}=1-o(1).$$ 
Thus a rainbow stacking exists with high probability.
\end{proof}

\medskip

\section{Bounding the chromatic polynomial}\label{sec:bound_chromatic}

In this section, we prove Proposition~\ref{prop:N_pi_final_bound} by relying on a second-order asymptotic expansion for the chromatic polynomial. Fadnavis \cite{Fadnavis} established explicit bounds on the coefficients of the logarithmic expansion of the chromatic polynomial by using Sokal's~\cite{Sokal} and Borgs'~\cite{Borgs} bounds on its complex roots. This yields the following powerful estimate.

\begin{theorem}[Fadnavis \cite{Fadnavis}]\label{thm:fadnavis_expansion}
Fix $\Delta\geq1$. Let $G$ be a simple graph with $v$ vertices, $e$ edges, $T$ triangles, and maximum degree at most $\Delta$. Then, as $q\to\infty$,
\begin{equation}\label{eq:chromatic_formula}
        P_G(q)=q^v\exp\left( -\frac eq-\frac{e/2+T}{q^2}+O_\Delta\left(\frac v{q^3}\right) \right),
\end{equation}
uniformly over all such graphs $G$.
\end{theorem}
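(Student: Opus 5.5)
We would prove Theorem~\ref{thm:fadnavis_expansion} by a polymer (cluster) expansion of $\log\bigl(q^{-v}P_G(q)\bigr)$. The key step is a Kotecký--Preiss convergence bound, which gives the error term, followed by an explicit computation of all clusters of total weight order at most $q^{-2}$.

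\textbf{Polymer representation.} Start from Whitney's expansion $P_G(q)=\sum_{A\subseteq E(G)}(-1)^{|A|}q^{c(A)}$, where $c(A)$ is the number of components of the spanning subgraph $(V,A)$. Group each $A$ according to the vertex sets of its nontrivial components. This gives
\[
q^{-v}P_G(q)=\sum_{\{S_1,\dots,S_k\}}\prod_{i=1}^k w(S_i),
\qquad
w(S)=q^{1-|S|}\sum_{\substack{A\subseteq E(G[S])\\ (S,A)\text{ connected}}}(-1)^{|A|}.
\]
Here the sum is over collections of pairwise disjoint vertex sets $S_i$ with $|S_i|\ge2$ and $G[S_i]$ connected. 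This is a hard-core polymer gas in which two polymers are incompatible if they intersect. Penrose's tree-graph bound gives $|w(S)|\le q^{1-|S|}\cdot(\text{number of spanning trees of }G[S])$. The number of connected sets of size $s$ containing a given vertex is at most $(e\Delta)^{s}$. Hence for $q\ge C\Delta$ the Kotecký--Preiss criterion $\sum_{S\ni x}|w(S)|e^{a|S|}\le a$ holds, and therefore
\[
\log\bigl(q^{-v}P_G(q)\bigr)=\sum_{\text{clusters }X}\phi(X)\prod_{S\in X}w(S)
\]
converges absolutely. Here $\phi$ is the Ursell function.

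\textbf{Error term.} For each vertex $x$, the Kotecký--Preiss tail estimate bounds the total absolute contribution of clusters containing $x$ whose total support size $\sum_{S\in X}(|S|-1)$ is at least $3$ by $O_\Delta(q^{-3})$. Summing over $x$ gives the $O_\Delta(v/q^3)$ error. The constants depend only on $\Delta$, so the estimate is uniform over all such graphs. This step is the main obstacle, and we would follow Sokal's and Fadnavis's bookkeeping for it: the tail must be extracted with the $q^{-3}$ decay while keeping only the factor $v$.

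\textbf{Low-order terms.} It remains to list the clusters of order $q^{-1}$ and $q^{-2}$.
\begin{enumerate}[label=(\roman*)]
\item A single edge polymer has weight $-1/q$. These contribute $-e/q$.
\item A cluster consisting of the same edge taken twice has $\phi=-\tfrac12$ and contributes $-\tfrac12 q^{-2}$ per edge, giving $-\tfrac{e}{2q^{2}}$.
\item A single polymer on a $3$-set has weight $q^{-2}$ times the signed count of connected spanning subgraphs. For an induced path this count is $+1$; for an induced triangle it is $3-1=2$.
\item A cluster of two distinct edges sharing a vertex has $\phi=-1$ and contributes $-q^{-2}$.
\end{enumerate}
Now group the order-$q^{-2}$ terms in (iii) and (iv) by their $3$-vertex support. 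An induced path carries one adjacent edge pair, so it contributes $+1-1=0$. A triangle carries three adjacent pairs, so it contributes $2-3=-1$. The total is therefore $-T/q^2$. Combining these terms yields
\[
-\frac eq-\frac{e/2+T}{q^2}+O_\Delta\!\left(\frac v{q^3}\right),
\]
and exponentiating gives \eqref{eq:chromatic_formula}.
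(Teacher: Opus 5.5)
Your polymer-expansion argument is correct, but it is not the route the paper relies on. The paper does not reprove the statement. It cites Fadnavis, whose argument goes through the complex zeros.

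Writing $P_G(q)=\prod_{i=1}^{v}(q-r_i)$ gives $\log\bigl(q^{-v}P_G(q)\bigr)=-\sum_{k\ge1}p_k/(kq^k)$ with $p_k=\sum_i r_i^k$. Sokal's (or Borgs') bound $|r_i|\le C\Delta$ yields $|p_k|\le v(C\Delta)^k$, so the terms with $k\ge3$ are $O_\Delta(v/q^3)$ at once. Newton's identities applied to the top Whitney coefficients $1,\,-e,\,\binom e2-T$ give $p_1=e$ and $p_2=e^2-2\bigl(\binom e2-T\bigr)=e+2T$, which is exactly $-e/q-(e/2+T)/q^2$.

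\textbf{What each route buys.} With the zero-free region taken as a black box, the paper's route is much shorter: the factor $v$ is just the number of roots, and no Ursell-function bookkeeping is needed. Yours is self-contained, since Sokal's zero bound is itself proved from essentially the polymer gas you set up. It also exhibits each coefficient as a sum of local cluster contributions. Your low-order computation is right and matches $p_2=e+2T$: single edge, doubled edge, induced path cancelling its adjacent edge pair, and triangle giving $2-3=-1$.

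\textbf{Two points to tighten.}
\begin{enumerate}
\item In the Kotecký--Preiss check you must bound $\sum_{S\ni x,\,|S|=s}\tau(G[S])$, where $\tau$ is the spanning-tree count. This is the number of $s$-vertex subtrees of $G$ through $x$, which is at most $(e\Delta)^{s-1}$. Bounding only the number of connected sets is not enough, because $\tau(G[S])$ can itself be exponential in $s$.
\item The tail step you flag as the main obstacle is routine. Run Kotecký--Preiss with the extra decay weight $d(S)=(|S|-1)\log(q/q_0)$, $q_0=C\Delta$; the resulting criterion is just the original one at $q_0$. Set $\Phi(X)=\phi(X)\prod_{S\in X}w(S)$. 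The conclusion $\sum_{X\ni x}|\Phi(X)|e^{d(X)}\le a$ then gives $\sum_{X\ni x,\ \sum_{S\in X}(|S|-1)\ge3}|\Phi(X)|\le a(q_0/q)^3$. Summing over $x$ gives $O_\Delta(v/q^3)$.
\end{enumerate}
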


We apply this expansion to the auxiliary graph $G_\pi$. First, we enumerate its edges, maximum degree, and establish a sharp lower bound on its triangles.

\begin{lemma}\label{lem:aux_basic}
For every $\pi\in\mathfrak S_n^m$, the graph $G_\pi$ has exactly 
$$e(G_\pi)=m(m-1)N-\wt(\pi)$$
edges, and its maximum degree is at most $2m-2$.
\end{lemma}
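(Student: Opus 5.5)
The plan is to count the edges of $G_\pi$ by inclusion--exclusion over the two types of adjacency in Definition~\ref{def:aux_graph}, and then read off the degree bound from the same local description.

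\textbf{Type (i) edges.} Fix a pair of distinct layers $k<\ell$. The edges of type (i) between layers $k$ and $\ell$ are the pairs $\{\beta_k(e),\beta_\ell(e)\}$ with $e\in\binom{[n]}2$, so there are exactly $N$ of them.

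\textbf{Type (ii) edges.} The edges of type (ii) between these layers are the pairs $\{\beta_k(e),\beta_\ell(e')\}$ with $\pi_k(e)=\pi_\ell(e')$, that is, $e=\pi_k^{-1}\pi_\ell(e')=\tau_{k\ell}(e')$. Since $\tau_{k\ell}$ induces a bijection on $\binom{[n]}2$, there are again exactly $N$ such pairs.

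\textbf{Overlap.} A pair is counted twice precisely when $e=e'$ and $\tau_{k\ell}(e)=e$, i.e.\ when $e$ is fixed setwise by $\tau_{k\ell}$. As noted after \eqref{eq:weight_def}, the number of such $e$ is $\binom{f_{k\ell}}2+t_{k\ell}=\wt_{k\ell}(\pi)$. To justify this, observe that $\{a,b\}$ is setwise fixed iff either $a,b$ are both fixed points of $\tau_{k\ell}$, or $\tau_{k\ell}$ swaps $a$ and $b$; in the latter case $(a\,b)$ is a transposition in the cycle decomposition.

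\textbf{Edge count.} The number of distinct edges between layers $k$ and $\ell$ is therefore $2N-\wt_{k\ell}(\pi)$. Since $G_\pi$ has no edges within a layer, summing over the $\binom m2$ pairs gives
\[
e(G_\pi)=2\binom m2 N-\sum_{k<\ell}\wt_{k\ell}(\pi)=m(m-1)N-\wt(\pi).
\]

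\textbf{Degree bound.} Each vertex $\beta_k(e)$ has, in each other layer $\ell\neq k$, exactly one type-(i) neighbour, namely $\beta_\ell(e)$. It also has exactly one type-(ii) neighbour there, namely $\beta_\ell(\pi_\ell^{-1}\pi_k(e))$. These two may coincide. Hence the degree of $\beta_k(e)$ is at most $2(m-1)=2m-2$.

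No step is a real obstacle. The only point requiring care is verifying that the setwise-fixed edges of $\tau_{k\ell}$ are counted exactly by $\binom{f}{2}+t$, without double counting. This holds because a transposition's support contains no fixed points, so the two cases are disjoint.
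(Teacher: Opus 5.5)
Your proof is correct and follows essentially the same argument as the paper. Between each pair of layers there are two perfect matchings of size $N$, and they overlap in exactly the $\wt_{k\ell}(\pi)$ edges of $\binom{[n]}2$ fixed setwise by $\tau_{k\ell}$. Summing over the $\binom m2$ layer pairs gives the edge count, and the degree bound comes from the same observation: each vertex has one neighbour of each type in every other layer.
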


\begin{proof}
Consider a fixed pair of distinct layers $k$ and $\ell$. Condition \textup{(i)} in Definition~\ref{def:aux_graph} generates a perfect matching of $N$ edges between these layers. Condition \textup{(ii)} independently generates another perfect matching of $N$ edges. 

An edge $\beta_k(e)\beta_\ell(e')$ belongs to both matchings if and only if $e=e'$ and $\pi_k(e)=\pi_\ell(e)$, which means $e$ is fixed by $\tau_{k\ell}$. As discussed, there are exactly $\wt_{k\ell}(\pi)$ such overlapping edges. By inclusion-exclusion, the total number of edges between layers $k$ and $\ell$ is $2N-\wt_{k\ell}(\pi)$. Summing over all $\binom m2$ layer pairs yields:
\[
        e(G_\pi)=\sum_{k<\ell}\bigl(2N-\wt_{k\ell}(\pi)\bigr) =m(m-1)N-\wt(\pi).
\]

For the degree bound, a fixed vertex $\beta_k(e)$ has exactly one condition-\textup{(i)} neighbour and one condition-\textup{(ii)} neighbour in any other layer $\ell$. Thus, it has at most 2 neighbours per alternative layer, yielding a maximum degree of $2(m-1) = 2m-2$.
\end{proof}

\begin{lemma}\label{lem:triangle_bound}
For every $\pi\in\mathfrak S_n^m$, if $T_\pi$ is the number of triangles in $G_\pi$, then
\begin{equation}\label{eq:triangle_lower_bound}
T_\pi\geq T_0-\frac{m-2}{3}\wt(\pi), \qquad\text{where}\qquad T_0=\frac{m(m-1)(m-2)}3N.
\end{equation}
\end{lemma}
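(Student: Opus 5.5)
The plan is to lower-bound $T_\pi$ by counting only two explicit families of triangles and then controlling how often these two families coincide. Since $G_\pi$ has no edges inside a layer, every triangle uses three distinct layers $k,\ell,j$. For each unordered triple of layers $\{k,\ell,j\}$ I will exhibit $2N$ candidate triangles.
\begin{itemize}
\item The \emph{type-(i)} triangles are $\{\beta_k(e),\beta_\ell(e),\beta_j(e)\}$ for $e\in\binom{[n]}2$. All three edges come from condition (i), so there are $N$ of them.
\item The \emph{type-(ii)} triangles are $\{\beta_k(\pi_k^{-1}(f)),\beta_\ell(\pi_\ell^{-1}(f)),\beta_j(\pi_j^{-1}(f))\}$ for $f\in\binom{[n]}2$. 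All three edges come from condition (ii), since every pair maps to the same position $f$ in the $\pi$-stacking. There are again $N$ of them.
\end{itemize}
Summing over the $\binom m3$ layer triples gives $2\binom m3N=\frac{m(m-1)(m-2)}{3}N=T_0$ triangles, counted with multiplicity. All other triangles, those mixing edges of types (i) and (ii), are simply discarded, since we only need a lower bound.

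Next I bound the overcount, that is, the number of vertex sets lying in both families. Within a fixed layer triple, distinct $e$ give distinct type-(i) triangles, because they have distinct vertices in layer $k$. Likewise distinct $f$ give distinct type-(ii) triangles. So the only repetition is a triangle that is simultaneously type (i) and type (ii).

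Such a coincidence forces $\beta_k(e)=\beta_k(\pi_k^{-1}(f))$, and similarly in layers $\ell$ and $j$. Hence $\pi_k(e)=\pi_\ell(e)=\pi_j(e)=f$. This says $e$ is fixed set-wise by $\tau_{k\ell}$, by $\tau_{kj}$, and by $\tau_{\ell j}$. Let $D_{k\ell j}$ be the number of such $e$. Then $D_{k\ell j}$ is at most each of $\wt_{k\ell}(\pi)$, $\wt_{kj}(\pi)$ and $\wt_{\ell j}(\pi)$, and therefore
\[
D_{k\ell j}\le \tfrac13\bigl(\wt_{k\ell}(\pi)+\wt_{kj}(\pi)+\wt_{\ell j}(\pi)\bigr).
\]

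Finally I sum over layer triples. We get $T_\pi\ge T_0-\sum_{\{k,\ell,j\}}D_{k\ell j}$. Each pair $\{k,\ell\}$ lies in exactly $m-2$ triples, so the total loss is at most $\frac{m-2}{3}\wt(\pi)$, which is \eqref{eq:triangle_lower_bound}.

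The only delicate step is checking that the type-(i) and type-(ii) families are each internally repetition-free, so that coincidences between the two families are the only overcount. This follows from injectivity of $e\mapsto\beta_k(e)$ and of $\pi_k^{-1}$. The rest is bookkeeping.
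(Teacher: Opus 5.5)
Your proof is correct and follows the paper's argument. Both count the identity cliques and the $\pi$-cliques (together $T_0$ triangle occurrences), identify the double counts with triple collisions $\pi_k(e)=\pi_\ell(e)=\pi_j(e)$, and bound these by pair collisions; your per-triple bound $3D_{k\ell j}\le \wt_{k\ell}(\pi)+\wt_{kj}(\pi)+\wt_{\ell j}(\pi)$, summed over layer triples, is just a localized form of the paper's incidence count $3D\le (m-2)\wt(\pi)$.
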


\begin{proof}
There are two natural families of $m$-cliques in $G_\pi$. For every $e\in\binom{[n]}2$, the set
\[
        A_e=\{\beta_1(e),\ldots,\beta_m(e)\}
\]
is a clique, coming from condition \textup{(i)} in the definition of $G_\pi$. These identity cliques contain $N\binom m3$ triangle occurrences. Similarly, for every final edge $f\in\binom{[n]}2$, the set
\[
        B_f=\{\beta_1(\pi_1^{-1}(f)),\ldots,
              \beta_m(\pi_m^{-1}(f))\}
\]
is a clique, coming from condition \textup{(ii)}. These $\pi$-cliques contain another $N\binom m3$ triangle occurrences. Therefore the two families contain altogether
\[
        2N\binom m3=\frac{m(m-1)(m-2)}3N=T_0
\]
triangle occurrences.

A triangle counted in the first family is determined by an edge $e$ and a triple of distinct layers. A triangle counted in the second family is determined by a final edge $f$ and a triple of distinct layers. The only way a triangle occurrence can be counted in both families is if the same three vertices appear in both descriptions. This happens precisely when there are distinct layers $a,b,c$ and an edge $e$ such that
\[
        \pi_a(e)=\pi_b(e)=\pi_c(e).
\]
Let $D$ be the number of such double-counted triangles. Each triple equality implies the three pair equalities
\[
        \pi_a(e)=\pi_b(e),\qquad
        \pi_a(e)=\pi_c(e),\qquad
        \pi_b(e)=\pi_c(e).
\]
Thus it contributes three incidences to the pair-collision count $\wt(\pi)$. Conversely, if a pair collision $\pi_k(e)=\pi_\ell(e)$ is fixed, then there are at most $m-2$ choices of a third layer that can extend it to a triple equality. Hence
\[
        3D\leq (m-2)\wt(\pi),
        \qquad
        D\leq \frac{m-2}{3}\wt(\pi).
\]
After subtracting these possible double-counts, the graph has at least $T_0-D$ distinct triangles. This proves \eqref{eq:triangle_lower_bound}.
\end{proof}

Now gathering Theorem~\ref{thm:fadnavis_expansion}, lemmas~\ref{lem:aux_basic} and ~\ref{lem:triangle_bound} together, we are ready to prove Proposition~\ref{prop:N_pi_final_bound}.

\begin{proof}[Proof of Proposition~\ref{prop:N_pi_final_bound}]
Let $w=\wt(\pi)$. By Lemma~\ref{lem:aux_basic}, the graph $G_\pi$ has $mN$ vertices, maximum degree at most $2m-2$, and $e(G_\pi)=m(m-1)N-w.$
By Lemma~\ref{lem:triangle_bound}, we have $T_\pi\geq T_0-\frac{m-2}{3}w,$ where $T_0=\frac{m(m-1)(m-2)}3N$.
Applying Theorem~\ref{thm:fadnavis_expansion} with $q=r$ gives, uniformly in $\pi$,
\begin{align}
\log\frac{P_{G_\pi}(r)}{r^{mN}}
&=-\frac{m(m-1)N-w}{r}
  -\frac{(m(m-1)N-w)/2+T_\pi}{r^2}
  +O_m\left(\frac N{r^3}\right) \notag\\
&\leq
  -\frac{m(m-1)N}{r}
  -\frac{\frac12m(m-1)N+T_0}{r^2}
  +\frac wr+\frac{\left(\frac12+\frac{m-2}{3}\right)w}{r^2}
  +o(1). \label{eq:prop_mid}
\end{align}
Here $N/r^3=o(1)$ was used in the last line. The coefficient of $w/r^2$ is
\[
        \frac12+\frac{m-2}{3}=\frac{2m-1}{6}=c_m,
\]
and the second-order constant term is
\[
        \frac12m(m-1)N+T_0
        =\frac{m(m-1)(2m-1)}6N.
\]
On the other hand, from \eqref{eq:EZ_sigma},
\begin{align*}
2\log E_{n,m,r}
&=2N\sum_{i=1}^{m-1} \log\left(1-\frac ir\right)\\
&=-\frac{m(m-1)N}{r}
  -\frac{m(m-1)(2m-1)N}{6r^2}
  +O_m\left(\frac N{r^3}\right)\\
&=-\frac{m(m-1)N}{r}
  -\frac{\frac12m(m-1)N+T_0}{r^2}
  +o(1).
\end{align*}
Substituting this into \eqref{eq:prop_mid} yields
\[
        \log\frac{P_{G_\pi}(r)}{r^{mN}E_{n,m,r}^2}
        \leq \frac wr+\frac{c_mw}{r^2}+o(1).
\]
Since $0\leq w\leq\binom m2N$, $c_m=O_m(1)$ and $r\to\infty$, for all large $n$ we have $r>2c_m$ and
\[
\frac{w}{r-c_m}
=\frac wr+\frac{c_mw}{r^2}+O_m\left(\frac{w}{r^3}\right)
=\frac wr+\frac{c_mw}{r^2}+o(1),
\]
where the $o(1)$ is uniform over all $\pi$. Hence
$\log\frac{P_{G_\pi}(r)}{r^{mN}E_{n,m,r}^2}
 \le \frac{\wt(\pi)}{r-c_m}+o(1)$ uniformly, and hence
$$P_{G_\pi}(r)\le (1+o(1))\cdot r^{mN}E_{n,m,r}^2
\exp\left(\frac{\wt(\pi)}{r-c_m}\right)$$
as required.
\end{proof}
\medskip

\section{Refined weighted permutation sum}\label{sec:weighted_permutation_sum}

In this section, we prove Proposition~\ref{prop:weighted_permutation_sum}. We estimate the normalized sum
\begin{equation}\label{eq:core_sum}
        S_n(\rho):=\frac1{(n!)^m}
        \sum_{\pi\in\mathfrak S_n^m}\exp\left(\frac{\wt(\pi)}\rho\right).
\end{equation}
Note that $S_n(\rho)\geq 1$ since $\exp(\wt(\pi)/\rho)\geq1$ for any $\pi$ and $\rho$. Therefore it is enough to show 
$S_n(\rho)\leq 1+o(1)$ uniformly whenever \(\rho\ge R_n+\omega(n)/(\log n)^2\). 

To evaluate this sum and overcome the highly dependent pairwise overlapping constraints, we adopt a step-by-step strategy:
\begin{enumerate}
    \item \textbf{Kruskal Majorization (Section~\ref{subsec:preliminaries}):} We first decouple the complex \(\binom{m}{2}\) pairwise permutation interactions by upper-bounding the total collision weight \(\wt(\pi)\) using only the \(d = m-1\) edges of a Kruskal maximum spanning tree.
    \item \textbf{The Easy Case -- Small Weights (Section~\ref{subsec:easy_regime}):} We split the configuration space based on the maximum Kruskal edge weight \(K_1(\pi)\). If \(K_1(\pi)\) is small, all relative permutations have very few fixed points. We show this globally sparse regime naturally contributes at most \(1+o(1)\).
    \item \textbf{The Hard Case I -- Near-Identity (Section~\ref{subsec:hard_regime_near}):} When \(K_1(\pi)\) is large, we reduce the sum over permutations to a sum over abstract tree defect data (fixed points and transpositions). We first analyze the ``near-endpoint'' regime where permutations are almost identity mappings. We show that the strict Kruskal edge-weight ordering forces a strict monotonic ordering on the defects, inducing an exponential decay that strictly bounds this part by \(o(1)\).
    \item \textbf{The Hard Case II -- Non-Endpoint (Section~\ref{subsec:hard_regime_non}):} For the remaining non-identity configurations, we bound the discrete factorials via a continuous multi-variate Gamma function. Boundary optimization shows this contribution is strictly $o(1)$.
\end{enumerate}

Finally, in Section~\ref{subsec:synthesis}, we assemble these bounds to complete the proof.

\subsection{Preliminaries}\label{subsec:preliminaries}

Let
\[
        L:=\log(n!), \qquad N:=\binom n2,
        \qquad d:=m-1,
        \qquad A:=\sum_{j=1}^d j=\binom m2 .
\]
Throughout this section, $C_m$ denotes a positive constant depending only on the fixed integer $m$; its value may change from line to line. Positive constants denoted by $c_m^\ast,c_m',\kappa_m$ also depend only on $m$. These constants are unrelated to the threshold constant $c_m=(2m-1)/6$ in the main text.

For a permutation $\tau\in\mathfrak S_n$, let
\[
        q(\tau):=\left|\{e\in\tbinom{[n]}2:\tau(e)=e\}\right| .
\]
Let $f(\tau)$ be the number of fixed vertices of $\tau$, and let $t(\tau)$ be the number of $2$-cycles of $\tau$. Then
\begin{equation}\label{eq:q-ft-detailed}
        q(\tau)=\binom{f(\tau)}2+t(\tau).
\end{equation}
Indeed, a fixed two-set is either formed by two fixed vertices, or is the support of a transposition.

\begin{lemma}\label{lem:rho-normalization-detailed}
It is enough to prove Proposition~\ref{prop:weighted_permutation_sum} under the additional assumption
\begin{equation}\label{eq:rho-endpoint-detailed}
        \rho=R_n+\frac{\omega_0(n)}{(\log n)^2},
        \qquad \omega_0(n):=\min\{\omega(n),\log n\}.
\end{equation}
Under this assumption,
\begin{equation}\label{eq:rho-asymptotics-refined}
        \frac n\rho=\left(\frac4m+o(1)\right)\log n,
        \qquad
        \frac N\rho=\left(\frac2m+o(1)\right)L,
        \qquad
        \frac L\rho=\left(\frac4m+o(1)\right)(\log n)^2,
\end{equation}
and
\begin{equation}\label{eq:all-diagonal-decay}
        \exp\left(\frac{AN}{\rho}-dL\right)
        \le \exp\left(-\left(\frac{4d}{m}+o(1)\right)\omega_0(n)\right)=o(1).
\end{equation}
\end{lemma}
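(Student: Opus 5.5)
The proof has two parts: a monotonicity reduction, and then the asymptotics at the endpoint value of $\rho$.

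\emph{Reduction.} Every $\wt(\pi)\ge 0$, so each summand $\exp(\wt(\pi)/\rho)$ is nonincreasing in $\rho$. Hence $S_n(\rho)$ is nonincreasing in $\rho$. Set
\[
\rho_0:=R_n+\frac{\omega_0(n)}{(\log n)^2}.
\]
Since $\omega_0\le\omega$, every admissible $\rho\ge R_n+\omega(n)/(\log n)^2$ satisfies $\rho\ge\rho_0$, and so $S_n(\rho)\le S_n(\rho_0)$. Combined with $S_n\ge1$, it suffices to prove $S_n(\rho_0)\le 1+o(1)$ with $\omega_0$ in place of $\omega$. This is legitimate because $\omega_0\to\infty$ (it is the minimum of two functions tending to infinity), so $\omega_0$ is itself an admissible choice of $\omega$ in Proposition~\ref{prop:weighted_permutation_sum}. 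The truncation $\omega_0\le\log n$ gives $\rho_0=R_n(1+O(\omega_0/(R_n(\log n)^2)))=R_n(1+o(1))$, and that is all the later expansions need.

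\emph{Asymptotics.} Stirling gives $L=n\log n-n+O(\log n)\sim n\log n$, and $N\sim n^2/2$. Therefore
\[
R_n=\frac{mN}{2L}\sim\frac{mn}{4\log n}.
\]
Since $\rho_0\sim R_n$, we get:
\begin{itemize}
\item $n/\rho_0\sim (4/m)\log n$;
\item $N/\rho_0\sim 2L/m$, because $N/R_n=2L/m$ exactly;
\item $L/\rho_0\sim (2L^2)/(mN)\sim (4/m)(\log n)^2$.
\end{itemize}
These are the three relations in \eqref{eq:rho-asymptotics-refined}.

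\emph{Decay estimate.} Here the first-order term must cancel exactly, so a rough $o(1)$ bound on $1/\rho_0$ is not enough. Using $A=\binom m2=md/2$ and $N=2LR_n/m$,
\[
\frac{AN}{\rho_0}-dL=dL\left(\frac{R_n}{\rho_0}-1\right)=-\frac{dL\,\eta}{\rho_0},\qquad \eta=\frac{\omega_0}{(\log n)^2}.
\]
By the third relation, $L\eta/\rho_0=(4/m+o(1))\,\omega_0$. The exponent is therefore $-(4d/m+o(1))\,\omega_0\to-\infty$, which gives \eqref{eq:all-diagonal-decay}.

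The only delicate point is this last step: the exact cancellation of the leading $dL$ term and keeping the $o(1)$ multiplicative (not additive) in front of $\omega_0$. Both follow from the identity $N/R_n=2L/m$.
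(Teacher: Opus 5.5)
Your proposal is correct and follows the paper's proof essentially step for step: the monotonicity of $S_n(\rho)$ in $\rho$ for the reduction, the asymptotics from $R_n\sim mn/(4\log n)$, and the exact identity $AN/R_n=dL$, which gives $AN/\rho-dL=-dL(\rho-R_n)/\rho$. You are also slightly more explicit than the paper on two points: why $\omega_0$ is itself an admissible choice of $\omega$, and why the truncation $\omega_0\le\log n$ ensures $\rho_0\sim R_n$.
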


\begin{proof}
Because $\wt(\pi)\ge0$, the function $\rho\mapsto S_n(\rho)$ is non-increasing. Therefore an upper bound proved at the smaller value in \eqref{eq:rho-endpoint-detailed} applies to every larger admissible $\rho$. The lower bound is automatic: each summand in \eqref{eq:target_sum} is at least $1$, so $S_n(\rho)\ge1$.

Since $R_n=mN/(2L)$, $L\sim n\log n$, $N\sim n^2/2$, we have $R_n\sim mn/(4\log n)$, which gives \eqref{eq:rho-asymptotics-refined} and
\[
\frac{AN}{R_n}=\frac{m(m-1)}2\cdot \frac{2L}{m}=dL.
\]
Consequently,
\begin{align*}
        \frac{AN}{\rho}-dL
        &=AN\left(\frac1\rho-\frac1{R_n}\right)
          =-\frac{AN(\rho-R_n)}{R_n\rho}                    \\
        &=-dL\frac{\rho-R_n}{\rho}
          =-d\,\frac{L}{\rho}\cdot \frac{\omega_0(n)}{(\log n)^2}
          =-\left(\frac{4d}{m}+o(1)\right)\omega_0(n),
\end{align*}
which proves \eqref{eq:all-diagonal-decay}.
\end{proof}

Before evaluating the sum $S_n(\rho)$, we must solve the primary obstacle: the total collision weight $\wt(\pi)$ is a sum over all $\binom{m}{2}$ pairs of layers, and these pairwise overlaps are highly dependent. Direct summation over $\mathfrak{S}_n^m$ subject to these dense, clique-like constraints is very hard. To break this dependency cycle, it is crucial to decouple the interactions by reducing them to a \emph{tree structure}. We achieve this by extracting a maximum spanning tree via Kruskal's algorithm. Kruskal's bottleneck property lets us bound the $\binom{m}{2}$ pairwise interactions using just $d = m-1$ tree edges. This decoupling is essential for simplifying the sum along the tree without cyclic structure.

\begin{lemma}[Kruskal majorization]\label{lem:kruskal-majorization-detailed}
For $a,b\in[m]$, let $w_{ab}\ge0$ be edge weights on the complete graph on $[m]$. Let $K_1\ge\cdots\ge K_d$ be the edge weights selected by Kruskal's maximum spanning tree algorithm, in the order in which they are selected. Then
\begin{equation}\label{eq:kruskal-majorization-detailed}
        \sum_{1\le a<b\le m}w_{ab}\le \sum_{j=1}^d jK_j .
\end{equation}
Consequently, for every $\pi=(\pi_1,\ldots,\pi_m)\in\mathfrak S_n^m$, if Kruskal is run with weights $w_{ab}=q(\pi_a^{-1}\pi_b)$, then
\begin{equation}\label{eq:refined-tree-majorization}
        \wt(\pi)\le \sum_{j=1}^d jK_j .
\end{equation}
\end{lemma}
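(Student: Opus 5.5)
The plan is to prove the combinatorial inequality \eqref{eq:kruskal-majorization-detailed} by a bottleneck argument along the Kruskal process, and then obtain \eqref{eq:refined-tree-majorization} simply by specialisation. Run Kruskal's algorithm on $K_m$ with weights $w_{ab}$, processing edges in non-increasing order of weight and accepting an edge whenever it joins two distinct components of the current forest. Let $F_0$ be the empty forest on $[m]$, and let $F_j$ be the forest after the $j$-th accepted edge, whose weight is $K_j$. Then $F_j$ has exactly $m-j$ components, and $K_1\ge\cdots\ge K_d$ because edges are processed in sorted order.

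The key step is the standard bottleneck property. Take any pair $\{a,b\}$ and let $j(a,b)\in[d]$ be the smallest index such that $a$ and $b$ lie in the same component of $F_{j}$. We claim $w_{ab}\le K_{j(a,b)}$. To see this, note that at the moment the algorithm examined the edge $ab$, either it accepted $ab$ or $a,b$ were already connected; in both cases $a,b$ are connected by the time all edges of weight $\geq w_{ab}$ examined so far have been processed. Suppose instead $w_{ab}>K_{j(a,b)}$. Then $ab$ is examined before the $j(a,b)$-th accepted edge, so $a$ and $b$ are already connected in $F_{j(a,b)-1}$. This contradicts the minimality of $j(a,b)$. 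Ties in the weights do not affect the argument, since only the order of examination is used.

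Next I would count pairs by their merge index. Write $P_j=\#\{\{a,b\}: j(a,b)=j\}$. At step $j$ two components of sizes $s,t$ with $s+t\le m-(j-1)+1$ are merged, so $P_j=st$. Summing the bound gives $\sum_{a<b}w_{ab}\le\sum_j P_jK_j$. I then need $\sum_j P_jK_j\le\sum_j jK_j$ for non-increasing $K_j\ge0$. By Abel summation this reduces to the prefix inequalities
\[
\sum_{i\le j}P_i\le\sum_{i\le j} i=\binom{j+1}2 .
\]
These hold because $\sum_{i\le j}P_i$ is the number of pairs lying inside common components of $F_j$. A forest with $j$ edges on $[m]$ has components of sizes $c_1,\dots$ with $\sum(c_r-1)=j$, so $\sum\binom{c_r}{2}$ is maximised, by convexity, when a single component has $j+1$ vertices, giving the value $\binom{j+1}2$. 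Moreover, the total $\sum_i P_i=\binom m2=\sum_{i\le d}i$, so the final Abel term matches and the inequality follows.

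The specialisation is immediate. Setting $w_{ab}=q(\pi_a^{-1}\pi_b)$, equation \eqref{eq:q-ft-detailed} shows that $q(\pi_a^{-1}\pi_b)=\wt_{ab}(\pi)$, so $\wt(\pi)=\sum_{a<b}w_{ab}$ and \eqref{eq:refined-tree-majorization} follows. The main obstacle is the Abel-summation step, and in particular getting the prefix bound right; this is the only non-formal point, and the convexity argument above should handle it.
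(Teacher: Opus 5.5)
Your proof is correct and is essentially the paper's argument: your merge index $j(a,b)$ picks out exactly the bottleneck weight $\min_{e\in P_T(a,b)}w(e)$, and your prefix inequalities $\sum_{i\le j}P_i\le\binom{j+1}{2}$ with Abel summation are the discrete form of the paper's layer-cake bound $c(x)\le\binom{s+1}{2}$. The only slip is the aside $s+t\le m-(j-1)+1$, which is false in general (the correct bound is $s+t\le j+1$), but you never use it.
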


\begin{proof}
Let $T$ be the maximum spanning tree returned by Kruskal. The standard bottleneck property of a maximum spanning tree says that, for every pair $a,b$,
\[
        w_{ab}\le \min_{e\in P_T(a,b)}w(e),
\]
where $P_T(a,b)$ is the path from $a$ to $b$ in $T$. Indeed, if $w_{ab}$ were larger than the minimum edge on this path, replacing such a minimum edge by $ab$ would produce a spanning tree of larger total weight, contradicting maximality.

Put $K_{d+1}:=0$. For $x>0$, let $T_x$ be the forest of edges of $T$ whose weights are at least $x$, and let $c(x)$ be the number of unordered vertex pairs contained in the same component of $T_x$. Then
\[
        \sum_{a<b}\min_{e\in P_T(a,b)}w(e)=\int_0^\infty c(x)\,dx .
\]
If exactly $s$ tree edges have weight at least $x$, then $T_x$ is a forest with $s$ edges. The number of pairs inside components is maximized when these $s$ edges form one connected component on $s+1$ vertices; hence $c(x)\le \binom{s+1}{2}$. Therefore
\begin{align*}
        \sum_{a<b}w_{ab}
        &\le \int_0^\infty c(x)\,dx
         \le \sum_{s=1}^d \binom{s+1}{2}(K_s-K_{s+1})        \\
        &=\sum_{s=1}^d
          \left(\binom{s+1}{2}-\binom{s}{2}\right)K_s
          =\sum_{s=1}^d sK_s .
\end{align*}
Taking $w_{ab}=q(\pi_a^{-1}\pi_b)$ and using the definition of $\wt(\pi)$ proves \eqref{eq:refined-tree-majorization}.
\end{proof}

With these preliminary tools in hand, we proceed to evaluate the sum \(S_n(\rho)\) by partitioning the configuration space into two regimes. The split is determined by the maximum Kruskal-selected edge weight \(K_1(\pi)\). We first dispense with the structurally simple case where all relative permutations have very few fixed points.

\medskip

\subsection{The easy case: small weights}\label{subsec:easy_regime}

Define
\[
        x_0:=\lceil\log n\rceil,
        \qquad K_0:=\binom{x_0}2,
        \qquad
        y:=\left\lfloor\frac n{\log n}\right\rfloor .
\]
Let $K_1(\pi)$ be the largest Kruskal-selected edge weight given $\pi=(\pi_1,...,\pi_m)$.

\begin{lemma}\label{lem:low-weight-part-detailed}
The contribution to $S_n(\rho)$ from tuples with $K_1(\pi)\le K_0$ is at most $1+o(1)$.
\end{lemma}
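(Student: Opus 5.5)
Since the contribution of a tuple to $S_n(\rho)$ is $(n!)^{-m}\exp(\wt(\pi)/\rho)$, and there are at most $(n!)^m$ tuples in this regime, it suffices to show that $\wt(\pi)/\rho=o(1)$ \emph{uniformly} over all $\pi$ with $K_1(\pi)\le K_0$. The lemma then follows at once by summing, with no need for any finer counting.

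The first step bounds $\wt(\pi)$ by $K_1(\pi)$. We apply Lemma~\ref{lem:kruskal-majorization-detailed} to the weights $w_{ab}=q(\pi_a^{-1}\pi_b)$. The selected weights satisfy $K_1\ge K_2\ge\cdots\ge K_d$, so
\[
\wt(\pi)\le\sum_{j=1}^d jK_j\le A\,K_1(\pi)\le A\,K_0 .
\]
Alternatively, Kruskal's first selected edge is a globally heaviest edge, so every $w_{ab}\le K_1$, and the same bound follows directly from $\wt(\pi)=\sum_{a<b}w_{ab}$ together with $\wt_{ab}(\pi)=q(\tau_{ab})$ via \eqref{eq:q-ft-detailed}.

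The second step uses the size of $\rho$. We have $K_0=\binom{\lceil\log n\rceil}{2}=O((\log n)^2)$. By Lemma~\ref{lem:rho-normalization-detailed}, we may take $\rho$ as in \eqref{eq:rho-endpoint-detailed}, and then \eqref{eq:rho-asymptotics-refined} gives $\rho=\Theta(n/\log n)$. Hence
\[
\frac{\wt(\pi)}{\rho}\le\frac{A K_0}{\rho}=O_m\!\left(\frac{(\log n)^3}{n}\right)=o(1)
\]
uniformly in $\pi$. Therefore each such summand is at most $\exp(o(1))=1+o(1)$.

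Finally, summing over at most $(n!)^m$ tuples and dividing by $(n!)^m$ bounds the contribution by $1+o(1)$. I do not expect any genuine obstacle here. The only point needing care is that the bound must hold uniformly in $\pi$, and it does because it depends only on the cap $K_0$. The real difficulty is deferred to the regime $K_1(\pi)>K_0$, where the number of tuples must itself be shown to be small.
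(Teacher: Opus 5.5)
Your proposal is correct and follows essentially the same route as the paper: bound every pair weight by $K_1(\pi)\le K_0$ (the paper uses exactly your ``alternative'' observation that Kruskal's first edge is globally heaviest), deduce $\wt(\pi)\le AK_0$ and $AK_0/\rho=O_m((\log n)^3/n)=o(1)$, and sum over at most $(n!)^m$ tuples. Your first route via Lemma~\ref{lem:kruskal-majorization-detailed} with $\sum_j jK_j\le AK_1$ is an equally valid way to obtain the same bound.
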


\begin{proof}
The first edge chosen by Kruskal has maximum weight among all $\binom m2$ pair weights. Hence $K_1(\pi)\le K_0$ implies $q(\pi_a^{-1}\pi_b)\le K_0$ for every $a<b$, and therefore
\[
        \wt(\pi)\le AK_0=O_m((\log n)^2).
\]
By \eqref{eq:rho-asymptotics-refined}, $AK_0/\rho=O_m((\log n)^3/n)=o(1)$. Thus the normalized contribution of this part is at most
\[
\frac1{(n!)^m}\cdot (n!)^m\exp(AK_0/\rho)=1+o(1).
\qedhere\]
\end{proof}

Having shown that permutations with globally small overlaps contribute negligibly, we turn our attention to the complementary hard regime where \(K_1(\pi)>K_0\). In this regime, at least one relative permutation is close to the identity, creating dense local dependencies.

\medskip

\subsection{The hard case I: tree sum and the near-identity endpoint}\label{subsec:hard_regime_near}

To decouple the complex pairwise constraints, the following lemma formalizes the transition from a sum over permutations to a sum over abstract tree topologies and their defect data.

\begin{lemma}\label{lem:tree-data-overcount-detailed}
The contribution to $S_n(\rho)$ from tuples with $K_1(\pi)>K_0$ is at most
\begin{equation}\label{eq:exact-ft-tree-sum}
 C_m
 \sum_{\substack{(f_j,t_j)_{j=1}^d:\ K_1\ge\cdots\ge K_d,
 \ K_1>K_0}}\prod_{j=1}^d
\frac{\exp(jK_j/\rho)}{f_j!2^{t_j}t_j!},
\qquad K_j:=\binom{f_j}2+t_j,
\end{equation}
where the sum is over integer data satisfying $0\le f_j\le n$ and $0\le t_j\le (n-f_j)/2$.
\end{lemma}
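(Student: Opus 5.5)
We bound the contribution by passing from $\pi$ to the Kruskal tree it induces, and then counting tuples along that tree. For each $\pi$ with $K_1(\pi)>K_0$, run Kruskal on $K_m$ with weights $w_{ab}=q(\pi_a^{-1}\pi_b)$, breaking ties by a fixed total order on pairs. This makes the output deterministic: a labelled spanning tree $T$ with an ordering $e_1,\dots,e_d$ of its edges and $K_1\ge\cdots\ge K_d$. By Lemma~\ref{lem:kruskal-majorization-detailed}, $\exp(\wt(\pi)/\rho)\le\prod_j\exp(jK_j/\rho)$. We then partition the tuples according to the ordered tree $(T,e_1,\dots,e_d)$, of which there are at most $m^{m-2}d!=C_m$ choices, and according to the defect data $(f_j,t_j)_{j}$, where $f_j=f(\tau_j)$ and $t_j=t(\tau_j)$ with $\tau_j$ the relative permutation along $e_j$. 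This data satisfies the constraints in the sum, since $K_j=q(\tau_j)=\binom{f_j}{2}+t_j$ by \eqref{eq:q-ft-detailed}, and $t_j\le (n-f_j)/2$ automatically.

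It then suffices to show that, for a fixed ordered tree and fixed data, the number of tuples $\pi$ realising them is at most
\[
(n!)^m\prod_{j=1}^d\frac{1}{f_j!\,2^{t_j}t_j!}.
\]
To count, root $T$ at vertex $1$, choose $\pi_1$ freely ($n!$ ways), and orient each tree edge away from the root. Then $\pi_b$ is determined by $\pi_a$ (its parent) and $\tau_{ab}=\pi_a^{-1}\pi_b$. The map $\pi\mapsto(\pi_1,(\tau_e)_{e\in T})$ is therefore injective, and orientation is harmless since $\tau^{-1}$ has the same $f$ and $t$ as $\tau$. It remains to bound the number of permutations $\tau\in\mathfrak S_n$ with exactly $f$ fixed points and exactly $t$ transpositions by $n!/(f!\,2^t t!)$. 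Choose the ordered data: the fixed set, the $t$ transposition pairs, and an arbitrary permutation of the remaining $n-f-2t$ points. This gives
\[
\binom{n}{f}\frac{(n-f)!}{(n-f-2t)!\,2^t t!}\,(n-f-2t)!=\frac{n!}{f!\,2^t t!},
\]
which overcounts because the remaining part is not required to be free of fixed points and $2$-cycles. Multiplying over the $d$ edges, the count is $n!\cdot\prod_j n!/(f_j!2^{t_j}t_j!)=(n!)^m\prod_j(\cdots)$. Dividing by $(n!)^m$ as in \eqref{eq:core_sum} and summing over data and trees gives \eqref{eq:exact-ft-tree-sum}.

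The main point needing care is that the ordering $K_1\ge\cdots\ge K_d$ and the weight bound must refer to the same tree edges whose $\tau_j$ we count. Deterministic tie-breaking guarantees this, since each $\pi$ is assigned exactly one ordered tree, so no tuple is counted under an inconsistent labelling. Other tuples are also counted, namely those for which Kruskal would not actually pick this tree. That overcount only loosens the bound, and it is fine because every summand is positive and the weight bound $\wt(\pi)\le\sum_j jK_j$ is applied only to the tree Kruskal does pick for $\pi$.
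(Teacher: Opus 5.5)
Your proposal is correct and follows essentially the same route as the paper: you root the Kruskal tree, encode $\pi$ injectively by the root permutation and the $d$ relative permutations, apply the majorization $\wt(\pi)\le\sum_j jK_j$, and bound each class by $n!/(f!\,2^t t!)$, absorbing the $C_m$ choices of ordered tree into the constant. The one thing you add is explicit deterministic tie-breaking, which makes the partition of tuples by ordered tree well defined; the paper leaves this implicit in its remark that overcounting tuples which do not realize the specified tree only gives an upper bound.
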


\begin{proof}
We first record the required counting bound. For fixed integers $0\le f\le n$ and $0\le t\le (n-f)/2$, the number of permutations $\tau\in\mathfrak S_n$ with $f(\tau)=f$ and $t(\tau)=t$ is at most
\begin{equation}\label{eq:fixed-transposition-count-detailed}
        \binom nf\binom{n-f}{2t}\frac{(2t)!}{2^t t!}(n-f-2t)!
        =\frac{n!}{f!2^t t!}.
\end{equation}
This is an upper bound, rather than an equality, because the final factor freely permutes the remaining vertices and may introduce additional fixed points or transpositions.

Fix the rooted tree, the orientation of each selected edge away from the root, and the order in which the $d$ tree edges are selected. Since $m$ is fixed, the number of such choices is at most $C_m$. For one fixed choice, write the oriented edges as $u_jv_j$, $1\le j\le d$, and set $\tau_j:=\pi_{u_j}^{-1}\pi_{v_j}$. The root permutation $\pi_o$ together with $\tau_1,\ldots,\tau_d$ determines $\pi$ uniquely by propagation along the rooted tree. Conversely, if we sum over all possible relative permutations with prescribed $(f_j,t_j)$, we may count tuples that do not realize the specified Kruskal tree, but this only gives an upper bound.

By Lemma~\ref{lem:kruskal-majorization-detailed},
$\wt(\pi)\le \sum_{j=1}^d jK_j$. The root $\pi_o$ contributes a factor $n!$. For each $j$, \eqref{eq:fixed-transposition-count-detailed} contributes at most $n!/(f_j!2^{t_j}t_j!)$. After division by $(n!)^m=(n!)^{d+1}$, all powers of $n!$ cancel, yielding \eqref{eq:exact-ft-tree-sum}.
\end{proof}

With the sum now majorized by independent tree parameters, we further partition the parameter space. The dominant contribution theoretically stems from the ``near-identity'' endpoint, where every selected tree edge represents a permutation with almost all vertices fixed. We bound this highly structured portion first.

\begin{lemma}\label{lem:near-endpoint-contribution-detailed}
The part of the tree sum \eqref{eq:exact-ft-tree-sum} in which $f_j\ge n-y$ for every $j$ is $o(1)$.
\end{lemma}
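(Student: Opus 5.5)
We compare every term of the restricted sum with the all-identity term $f_1=\cdots=f_d=n$, $t_j=0$. Here $K_j=N$ for all $j$ and each factor $1/f_j!$ equals $1/n!$. So this single term equals $\exp(AN/\rho-dL)$, which is $o(1)$ by \eqref{eq:all-diagonal-decay}. It therefore suffices to show that the restricted sum, divided by this term, is $O_m(1)$.

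Write $f_j=n-s_j$ with $0\le s_j\le y$ and $0\le t_j\le s_j/2$. Then
\[
K_j-N=-s_jn+\tfrac{s_j(s_j+1)}2+t_j .
\]
Using $n!/(n-s_j)!\le n^{s_j}$, the $j$-th factor of the ratio is at most
\[
\Bigl[n\exp\Bigl(-\tfrac{j}{\rho}\bigl(n-\tfrac{s_j+1}{2}\bigr)\Bigr)\Bigr]^{s_j}\cdot\frac{\exp(jt_j/\rho)}{2^{t_j}t_j!}.
\]
Since $s_j\le y=o(n)$ and $n/\rho=(\frac4m+o(1))\log n$ by \eqref{eq:rho-asymptotics-refined}, the bracket is $n^{1-\frac{4j}{m}+o(1)}$, uniformly in $s_j\le y$. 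The $t_j$-part summed over all $t_j\ge0$ is at most $\exp(e^{j/\rho}/2)=O(1)$.

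For small $j$ and large $m$ the bracket can exceed $1$ (e.g.\ $j=1$ gives $n^{1-4/m}$), so the sum does not factorize into convergent geometric series. This is the main obstacle, and it is where the Kruskal ordering $K_1\ge\cdots\ge K_d$ enters.

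In the near-endpoint range, changing $s$ by one changes $K$ by roughly $n$, while $t_j\le y/2=o(n)$ and the quadratic term $s_j^2\le y^2/2$ is at most $n^2/(2(\log n)^2)$. Hence $K_j\ge K_{j+1}$ forces $s_1\le s_2\le\cdots\le s_d$; I would verify this carefully from the displayed formula for $K_j-N$. Then write $s_j=\sum_{i\le j}\delta_i$ with $\delta_i\ge0$, and rearrange the exponent via suffix sums:
\[
\sum_j s_j\Bigl(1-\tfrac{4j}{m}\Bigr)=\sum_{i=1}^d\delta_i\Bigl((m-i)-\tfrac4m\bigl(A-\tbinom i2\bigr)\Bigr).
\]
With $A=\binom m2$, the coefficient of $\delta_i$ equals $-(m-2+i)+\frac{2i(i-1)}m$.

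This coefficient is at most $-\kappa_m<0$ for every $1\le i\le m-1$. For example, at $i=m-1$ it equals $-(2m-3)+(2m-6+4/m)=-3+4/m<0$ when $m\ge2$, and the general case follows from $2i(i-1)/m<i-1+\cdots$ by a routine check since $i\le m-1$. Because the $o(1)$ error in the exponents is uniform, the sum over $(\delta_1,\dots,\delta_d)\in\mathbb Z_{\ge0}^d$ is bounded by $\prod_i\sum_{\delta\ge0}n^{-(\kappa_m-o(1))\delta}=1+o(1)$.

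Multiplying by the $O(1)$ factors from the $t_j$ and by $C_m$ gives a ratio of $O_m(1)$. The near-endpoint part is therefore $O_m(1)\cdot\exp(AN/\rho-dL)=o(1)$.
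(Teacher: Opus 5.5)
Your proof is correct and follows the paper's argument essentially step for step. Both compare every term with the all-identity term $T_\star=\exp(AN/\rho-dL)$, bound each factor using $n!/(n-s_j)!\le n^{s_j}$, sum out $t_j$, use $K_1\ge\cdots\ge K_d$ to force $s_1\le\cdots\le s_d$, and conclude from positive suffix sums after writing the $s_j$ as partial sums of increments. Your coefficient $-(m-2+i)+2i(i-1)/m$ equals the paper's $-(m-i)(m+2i-2)/m$, which also settles your ``routine check'' at once.

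The ordering step you defer is the paper's disjoint-intervals argument for the sets $\mathcal D_s$. The relevant fact there is not the absolute size of the quadratic term but the increment: $K(s,0)-K(s+1,t')=n-(s+1)-t'\ge n-\tfrac32(s+1)>0$ for $s\le y$, since $t'\le (s+1)/2$.
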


\begin{proof}
Write $s_j:=n-f_j$, so $0\le s_j\le y$. We first identify the ordering forced by $K_1\ge\cdots\ge K_d$. If $f=n-s$, then
\begin{align*}
        \binom f2+t
        &=\binom{n-s}{2}+t
          =N-\left(\left(n-\frac12\right)s-\frac{s^2}{2}-t\right).
\end{align*}
Define $D(s,t):=\left(n-\frac12\right)s-\frac{s^2}{2}-t$ and $\mathcal D_s:=\{D(s,t):0\le t\le\lfloor s/2\rfloor\}.$ For $s\le y$, we have
\begin{align*}
\min \mathcal D_{s+1}-\max \mathcal D_s
=D\left(s+1,\left\lfloor\frac{s+1}{2}\right\rfloor\right)-D(s,0) \ge n-2-\frac{3s}{2}>0
\end{align*}
for all sufficiently large $n$. Hence larger $s$ gives smaller $K=N-D(s,t)$, and the condition $K_1\ge\cdots\ge K_d$ implies
\begin{equation}\label{eq:defect-ordering-detailed}
        0\le s_1\le s_2\le\cdots\le s_d\le y .
\end{equation}

Note by Lemma~\ref{lem:rho-normalization-detailed}, the endpoint term $s_1=\cdots=s_d=0$ and $t_1=\cdots=t_d=0$ equals
\[
        T_\star:=\prod_{j=1}^d\frac{\exp(jN/\rho)}{n!}
        =\exp\left(\frac{AN}{\rho}-dL\right)=o(1).
\]
We compare all other near-endpoint terms to it. For a fixed $j$, using $K_j=N-D(s_j,t_j)$, we have
\begin{align*}
 \frac{\exp(jK_j/\rho)}{(n-s_j)!2^{t_j}t_j!}
 \bigg/ \frac{\exp(jN/\rho)}{n!}
 &=\frac{n!}{(n-s_j)!}\,
   \frac{\exp\{-jD(s_j,t_j)/\rho\}}{2^{t_j}t_j!}                 \\
 &\le \exp\left[-\left(\frac{j(n-1/2)}\rho-\log n-C_m\right)s_j\right]
       \frac{\exp(jt_j/\rho)}{2^{t_j}t_j!}.
\end{align*}
Here we used $n!/(n-s_j)!\le n^{s_j}$ and $\frac{j s_j^2}{2\rho}\le C_m s_j $, since $s_j\le y=O(n/\log n)$ and $\rho=\Theta(n/\log n)$.

For every fixed $j$, summing over all possible $t_j$ gives
\[
        \sum_{t\ge0}\frac{\exp(jt/\rho)}{2^t t!}
        =\exp\left(\frac{e^{j/\rho}}2\right)=O_m(1).
\]
Thus the near-endpoint part is at most
\begin{equation}\label{eq:endpoint-sum-repaired}
        C_mT_\star
        \sum_{0\le s_1\le\cdots\le s_d\le y}
        \exp\left(-\sum_{j=1}^d b_js_j\right),
        \qquad
        b_j:=\frac{j(n-1/2)}\rho-\log n-C_m .
\end{equation}
Set $s_j=u_1+\cdots+u_j$ with $u_a\ge0$. Then
\[
        \sum_{j=1}^d b_js_j
        =\sum_{a=1}^d u_a\sum_{j=a}^d b_j .
\]
Using \eqref{eq:rho-asymptotics-refined}, for each fixed $a\in[d]$,
\begin{align}\label{eq:suffix-positive-repaired}
        \sum_{j=a}^d b_j
        &=\left(\frac4m\sum_{j=a}^d j-(d-a+1)+o(1)\right)\log n-O_m(1) \\
        &=\left(\frac{(m-a)(m+2a-2)}m+o(1)\right)\log n-O_m(1)
          \ge c_m'\log n . \notag
\end{align}
The last coefficient is positive for every $1\le a\le d=m-1$. Hence the sum in \eqref{eq:endpoint-sum-repaired} is bounded by
\[
        \prod_{a=1}^d\sum_{u_a\ge0}\exp(-c_m'u_a\log n)=O_m(1).
\]
Multiplying by $T_\star=o(1)$, the near-endpoint contribution is $o(1)$.
\end{proof}

\subsection{The hard case II: the non-endpoint Gamma bound}\label{subsec:hard_regime_non}

It remains to consider the part of \eqref{eq:exact-ft-tree-sum} in which at least one selected relative permutation has fewer than $n-y$ fixed vertices. Let
\[
        z:=n-y+2,
        \qquad
        h(K):=\frac{1+\sqrt{1+8K}}2,
\]
so that $\binom{h(K)}2=K$.

\begin{lemma}\label{lem:nonendpoint-gamma-reduction}
The part of \eqref{eq:exact-ft-tree-sum} in which at least one coordinate satisfies $f_j<n-y$ is bounded by
\begin{equation}\label{eq:nonendpoint-gamma-sum}
        C_m\sum_{\substack{N\ge K_1\ge\cdots\ge K_d\ge0\\ K_1>K_0,\ h(K_d)\le z}}
        \exp\left(\sum_{j=1}^d\left(
        \frac{jK_j}\rho-\log\Gamma(h(K_j)+1)\right)\right).
\end{equation}
\end{lemma}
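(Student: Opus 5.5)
The plan is to reorganize the sum \eqref{eq:exact-ft-tree-sum} by the values $K_j=\binom{f_j}2+t_j$ rather than by the pairs $(f_j,t_j)$. The factor $\exp(jK_j/\rho)$ and the ordering constraint $K_1\ge\cdots\ge K_d$, $K_1>K_0$, depend only on the $K_j$. So it suffices to prove a one-coordinate estimate: uniformly in $0\le K\le N$,
\begin{equation*}
        \sum_{\substack{(f,t):\ \binom f2+t=K\\ 0\le f\le n,\ 0\le 2t\le n-f}}\frac1{f!\,2^t\,t!}\le \frac{C}{\Gamma(h(K)+1)}
\end{equation*}
for an absolute constant $C$. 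Taking the product over $j$ then gives \eqref{eq:nonendpoint-gamma-sum} with $C_m$ enlarged by $C^d$. In addition, the restriction ``some $f_j<n-y$'' has to be transferred to the condition $h(K_d)\le z$.

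\textbf{Transferring the restriction.} Suppose $f_j\le n-y-1$ for some $j$. Since $t_j\le (n-f_j)/2\le n/2$, we get $K_j\le\binom{n-y-1}2+n/2$. A direct computation of $h$ at this value, using $y=o(n)$, shows it is at most $n-y+2=z$. Because $K_d\le K_j$ and $h$ is increasing, $h(K_d)\le z$. Dropping all remaining information about which coordinate was small only enlarges the sum. This is why the constraint is placed on $K_d$ alone.

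\textbf{The one-coordinate estimate.} Fix $K$ and a pair $(f,t)$ with $\binom f2+t=K$. Then $h(K)\ge f$; write $h(K)=f+\delta$ with $\delta\ge0$. I would bound the ratio $\Gamma(h+1)/f!\le (h+1)^{\delta}$, using log-convexity of $\Gamma$ or simply $\Gamma(x+\delta+1)\le (x+1+\delta)^{\delta}\Gamma(x+1)$. I also need $\delta$ in terms of $t$. From $\binom{f+\delta}2-\binom f2=t$ we get $\delta(f+(\delta-1)/2)=t$. This gives $\delta\le t/(f-\tfrac12)$ when $f\ge1$, and $\delta\le h(t)=O(\sqrt t+1)$ in general. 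Consequently
\begin{equation*}
        \frac{\Gamma(h(K)+1)}{f!\,2^t\,t!}\le \frac{\exp\bigl(\delta\log(h+1)\bigr)}{2^t t!}.
\end{equation*}
This splits into two regimes.
\begin{itemize}
\item If $t\le f$, then $\delta\log(h+1)\le 2t\log(h+1)/f$. Here $h\le f+\delta\le f+2$, so this is $O(t)$, and the bound is at most $e^{O(t)}/(2^tt!)$.
\item If $t>f$, then $h=O(\sqrt t)$ and $\delta=O(\sqrt t)$. So the numerator is $e^{O(\sqrt t\log t)}$, which is crushed by $t!$.
\end{itemize}
In both regimes the ratio is at most $g(t)$ for a summable function $g$ (for instance $g(t)=C^t/t!$). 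Distinct pairs with the same $K$ have distinct $t$, since $f$ determines $t$ and $t\mapsto f$ is injective for fixed $K$. Hence summing over pairs gives at most $\sum_t g(t)=C$.

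\textbf{Main obstacle.} The delicate point is making the first regime uniform. When $f$ is tiny, the gap $h-f$ is not controlled by $t/f$, and the estimate $\Gamma(h+1)/f!\le (h+1)^\delta$ has to be paired carefully with the relation $\delta(f+(\delta-1)/2)=t$. I would handle it by splitting at $f\ge \sqrt t$ versus $f<\sqrt t$ rather than at $t\le f$, and checking that the exponent stays $O(t)$ in the first case and $O(\sqrt t\log t)$ in the second. Everything else is bookkeeping.
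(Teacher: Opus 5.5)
Your proposal is correct after your own re-split, and it reaches the key one-coordinate bound $B(K)\le C/\Gamma(h(K)+1)$ by a genuinely different route from the paper. The outer structure matches the paper's. You group by the $K_j$, prove that bound, and transfer ``some $f_j<n-y$'' to $h(K_d)\le z$. Your cruder transfer bound $K_j\le\binom{n-y-1}2+n/2$ is fine, since $\binom{n-y+2}2-\binom{n-y-1}2=3(n-y)\ge n/2$.

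\textbf{Comparison of routes.} The paper uses a dominant-term argument. With $s=\lfloor h\rfloor$ and $A_f=1/(f!2^{t_f}t_f!)$, it shows $A_f\le A_{f+1}/2$ for $f\ge2$ (and for $f=1$ once $s$ is large), together with $A_0=A_1$. This gives $B(K)\le CA_s$. It then compares only the top term with $\Gamma(h+1)$, where $h-s<1$ makes $\Gamma(h+1)/s!\le 2^{u}$ a short estimate.

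You instead compare every term with $\Gamma(h+1)$, using $\Gamma(f+\delta+1)/f!\le (h+1)^\delta$ and $t=\delta(f+(\delta-1)/2)$. This yields a summable majorant in the transposition count $t$. Your route avoids the ratio computation and gives a uniform termwise bound, but it must control $\delta$ when $\delta$ is large. The paper's route also shows the sum is essentially its $f=s$ term, and it only ever handles $\delta<1$.

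\textbf{Problems in the intermediate steps.} Your second bullet is false as stated. For $t>f$ you do not have $h=O(\sqrt t)$: take $f=t-1$, so $K\approx t^2/2$ and $h\approx t$.

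Your diagnosis in the last paragraph is also misplaced. The regime $t\le f$ causes no trouble even for small $f$, because then $t$ is small too: $\delta\le 2t/f\le 2$ and $\delta\log(h+1)\le 2t\log(f+3)/f\le 2t\log 4$. The real trouble is $\sqrt t\lesssim f<t$.

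Your re-split at $f$ versus $\sqrt t$ does fix this:
\begin{itemize}
\item If $f\ge\sqrt t$ with $t\ge1$, then $\delta\le t/(f-\tfrac12)\le 2t/f\le 2\sqrt t\le 2f$. Hence $h+1\le 3f+1$ and $\delta\log(h+1)\le 2t\log(3f+1)/f\le 2t\log 4$.
\item If $f<\sqrt t$, then $\delta\le h(t)=O(\sqrt t+1)$ and $h=O(\sqrt t+1)$. The exponent is $O(\sqrt t\log(t+2))$, which $2^tt!$ absorbs.
\end{itemize}

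Finally, $f\mapsto t=K-\binom f2$ is not injective at $f\in\{0,1\}$, so each $t$ occurs at most twice. This costs only a harmless factor of $2$.
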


\begin{proof}
For $0\le K\le N$, define
\[
        B(K):=\sum_{\substack{f,t:\ 0\le f\le n,\ 0\le t\le(n-f)/2\\ \binom f2+t=K}}
        \frac1{f!2^t t!}.
\]
We claim that there is an absolute constant $C$ such that
\begin{equation}\label{eq:gamma-counting-bound}
        B(K)\le \frac{C}{\Gamma(h(K)+1)}.
\end{equation}
To prove this, put $h=h(K)$ and $s=\lfloor h\rfloor$. Dropping the restriction $t\le(n-f)/2$, all possible $f$'s satisfy $0\le f\le s$. For such $f$, set
$t_f:=K-\binom f2$ and $A_f:=\frac1{f!2^{t_f}t_f!}$.

For $1\le f\le s-1$,
\begin{equation}\label{eq:Af-ratio}
        \frac{A_f}{A_{f+1}}
        =\frac{f+1}{2^f\,t_f(t_f-1)\cdots(t_f-f+1)}.
\end{equation}
Since $K\ge\binom{s}{2}$, we have
\[
        t_f\ge \binom{s}{2}-\binom f2\ge f
        \qquad (1\le f\le s-1),
\]
and hence the denominator in \eqref{eq:Af-ratio} is at least $2^f f!$. Thus $A_f\le A_{f+1}/2$ for $f\ge2$, while for $f=1$ the same conclusion holds for all large $s$; the remaining finitely many $s$'s are absorbed into the constant. Also $A_0=A_1$. Hence
\begin{equation}\label{eq:B-by-As}
        B(K)\le C A_s .
\end{equation}
Now write $h=s+\theta$ with $0\le\theta<1$, and set $u:=t_s=K-\binom{s}{2}$. Then
\[
        u=\binom{s+\theta}{2}-\binom{s}{2}
          =\frac{\theta(2s+\theta-1)}2.
\]
Using $\log\Gamma(s+\theta+1)-\log\Gamma(s+1)=\int_0^\theta\psi(s+1+v)\,dv\le \theta\log(s+2)$, where $\psi=\Gamma'/\Gamma$, and the relation $\theta\le u/(s-1/2)$, we get, for all large $s$,
\[
        \frac{\Gamma(h+1)}{s!}
        \le \exp\left(\frac{u\log(s+2)}{s-1/2}\right)
        \le 2^u .
\]
Therefore $A_s=1/(s!2^u u!)\le 1/\Gamma(h+1)$, again after adjusting the absolute constant for finitely many small $s$. Together with \eqref{eq:B-by-As}, this proves \eqref{eq:gamma-counting-bound}.

If $f\le n-y-1$, then the maximum possible value of $\binom f2+t$, subject to $t\le(n-f)/2$, is attained at $f=n-y-1$ for large $n$, and satisfies
\[
        \binom f2+t
        \le \binom{n-y-1}{2}+\frac{y+1}{2}
        \le \binom{n-y+2}{2}=\binom z2 .
\]
Thus, if at least one coordinate has $f_j<n-y$, then at least one corresponding $K_j$ is at most $\binom z2$. Since the $K_j$'s are arranged in decreasing order, $K_d\le\binom z2$, equivalently $h(K_d)\le z$. Applying \eqref{eq:gamma-counting-bound} to each coordinate of \eqref{eq:exact-ft-tree-sum} gives \eqref{eq:nonendpoint-gamma-sum}.
\end{proof}

Now that the discrete sum is majorized by a continuous multivariate Gamma function, we can apply calculus to optimize this function over its domain.

\begin{lemma}\label{lem:nonendpoint-contribution-detailed}
The part of \eqref{eq:exact-ft-tree-sum} in which at least one selected relative permutation has fewer than $n-y$ fixed vertices is $o(1)$.
\end{lemma}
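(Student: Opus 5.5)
The plan is to bound \eqref{eq:nonendpoint-gamma-sum} by analysing the single-coordinate exponent
\[
g_j(K):=\frac{jK}{\rho}-\log\Gamma(h(K)+1),\qquad 0\le K\le N .
\]
I will treat it as a function of $h=h(K)$, using $K=\binom h2$. Then
\[
g_j=\frac{j h(h-1)}{2\rho}-\log\Gamma(h+1),
\]
whose second derivative in $h$ is $\frac{j}{\rho}-\psi'(h+1)$, where $\psi'(x)\approx 1/x$. So $g_j$ is concave on $h\lesssim \rho/j$ and convex on $h\gtrsim\rho/j$.

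\textbf{Small coordinates.} On the concave region $h\le h^\ast:=\varepsilon\rho$, with $\varepsilon=\varepsilon_m$ small, we have $jh^2/(2\rho)\le \varepsilon j h\ll h\log h$. Hence $g_j(K)\le -\tfrac12 h\log h$ for large $h$. Since at most $h+1$ values of $K$ share $\lfloor h\rfloor$, this gives
\[
\sum_{K:\,h(K)\le h^\ast} e^{g_j(K)}=O_m(1),\qquad \sum_{K:\,x_0<h(K)\le h^\ast} e^{g_j(K)}=o(1).
\]
The $o(1)$ holds because the tail $\sum_{h>x_0}h\,e^{-\frac12 h\log h}$ vanishes as $x_0=\lceil\log n\rceil\to\infty$.

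\textbf{Large coordinates.} On the convex region $h\in[h^\ast,n]$, $g_j$ is maximised at an endpoint. At $h^\ast$ it is $-\Theta(\rho\log\rho)$. At $h=n$ it equals
\[
g_j(N)=\frac{jN}{\rho}-L=\Bigl(\frac{2j}{m}-1+o(1)\Bigr)L .
\]
At the cap $h=z$ the convexity bound is replaced by a direct computation:
\[
g_j(N)-g_j\bigl(\tbinom z2\bigr)\approx j\,\frac{ny}{\rho}-y\log n=\Bigl(\frac{4j}{m}-1+o(1)\Bigr)y\log n .
\]
For $j=d$ this is $\ge c\,y\log n=\Theta(n)$, because $4(m-1)/m>1$ for $m\ge2$. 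Near $h=n$ the exponent $g_j$ varies on a scale of $O(1)$ per unit step in $f$, so the count of $K$ values cannot be allowed to cost a polynomial factor. Instead I sum over large coordinates exactly as in Lemma~\ref{lem:near-endpoint-contribution-detailed}, with a $\log n$ loss per unit of $s$.

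\textbf{Assembly.} Fix a configuration type: the set $P\subseteq[d]$ of coordinates with $h_j\ge h^\ast$. By the ordering $K_1\ge\cdots\ge K_d$, $P$ is a prefix $\{1,\dots,p\}$, and $p\le d$. If $p<d$, the condition $K_1>K_0$ is active only when $p=0$, and then the $j=1$ factor gives the $o(1)$ above. If $p\ge1$, the prefix contributes at most
\[
\exp\Bigl(\sum_{j\le p}\bigl(\tfrac{2j}{m}-1\bigr)L+o(L)\Bigr)\times\mathrm{poly}(n).
\]
The exponent is $\frac{p(p+1-m)}{m}L$, which is $\le -\frac{L}{m}$ whenever $p<d$. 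This beats every polynomial factor, while the remaining coordinates contribute $O_m(1)$ each. If $p=d$, the constraint $h(K_d)\le z$ is what rescues us: the prefix sum equals $0\cdot L$ at the endpoint, but coordinate $d$ is forced to lose $\Theta(n)$. One must also check that coordinates $j<d$ cannot gain more than $o(n)$ by moving off $h=n$. This follows from the ordering $K_j\ge K_d$ together with monotonicity of $g_j$ on $[z,n]$ for the relevant $j$, or failing that, from the convexity endpoint bound applied coordinatewise on the box $[h^\ast,n]^p$ intersected with the cone.

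\textbf{Main obstacle.} The delicate step is this last case $p=d$ with some $h_j$ strictly between $z$ and $n$. There the per-coordinate marginals $(\frac{2j}{m}-1)L$ have mixed signs, so no single coordinate is uniformly controlled. My first attempt would be to reuse the suffix-sum trick of Lemma~\ref{lem:near-endpoint-contribution-detailed}. Write $n-h_j=\sigma_1+\cdots+\sigma_j$ and use convexity to linearise. Each increment $\sigma_a$ then carries a cost of at least $\bigl(\frac{(m-a)(m+2a-2)}{m}+o(1)\bigr)\log n$ per unit, and the forced $\sigma$-mass $n-z\approx y$ turns this into $e^{-\Theta(n)}$, which absorbs the $n^{2d}$ term count.
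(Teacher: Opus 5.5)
Your split into small coordinates ($h_j<h^\ast$, summed exactly) and large ones is workable, and the case $p=0$ is fine. The core of the lemma is not established, though: you need to bound $\sum_j g_j$ over the \emph{ordered} set of large coordinates by $-\omega(\log n)$, and the specific claims you use for this are false.
\begin{enumerate}
\item With $h^\ast=\varepsilon\rho$, the $g_j$ are not convex on $[h^\ast,n]$, since $g_j''=j/\rho-\psi'(h+1)<0$ for $h\lesssim\rho/j$.
\item For $1\le p<d$ you only evaluate at $h_1=\cdots=h_p=n$. At $h_1=\cdots=h_p=h^\ast$ the exponent is $-\Theta(n)$, far above $\frac{p(p+1-m)}{m}L=-\Theta(n\log n)$. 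A coordinatewise bound on the box is useless, because $g_j(n)=(\frac{2j}{m}-1+o(1))L>0$ for $j>m/2$.
\item Most seriously, the suffix-sum step for $p=d$ imports the slopes $(\frac{4j}{m}-1)\log n$ from Lemma~\ref{lem:near-endpoint-contribution-detailed}. Those are valid only while $n-h_j=O(y)$. Convexity gives chords, and for convex $g_j$ the per-unit cost $(g_j(n)-g_j(n-s))/s$ \emph{decreases} in $s$. For instance, at $h_1=\cdots=h_d=n/2$ the true exponent is $\sum_j g_j(n/2)=-(\frac d4+o(1))n\log n$. Your bound (with $c_1=d$, $\sigma_1=n/2$) asserts it is at most $-(\frac d2+o(1))n\log n$.
\item Single coordinates can gain $\Theta(n)$ by leaving $h=n$: for $m\ge5$, $g_1(z)-g_1(n)=(1-\frac4m+o(1))H$.
\end{enumerate}
So no per-coordinate or locally linearised argument can work; the ordering must be used jointly.

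The missing idea is an endpoint (vertex) reduction on the ordered region. Within your framework, take $h^\ast=m\rho$ instead. The bound $g_j\le-\frac12h\log h$ still holds for $h\le m\rho$, and $\psi'(h+1)\le 1/h\le j/\rho$ makes every $g_j$ genuinely convex on $[h^\ast,n]$. Then $\sum_{j\le p}g_j(h_j)$ is convex on the polytope $\{n\ge h_1\ge\cdots\ge h_p\ge h^\ast\}$, intersected with $\{h_d\le z\}$ when $p=d$. It is therefore maximised at a vertex $(n^a,(h^\ast)^{p-a})$ or $(n^a,z^{d-a})$ with $a<d$. We have $\sum_{j\le a}g_j(n)\le-\frac{a(d-a)}{m}L$ and $\sum_{j>a}(g_j(n)-g_j(z))=(1+\frac{2a}{m}+o(1))(d-a)H$. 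Hence every vertex gives $-\Omega(n)$, which absorbs the $\mathrm{poly}(n)$ lattice count, while the small coordinates contribute $O_m(1)$ each.

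This is essentially the paper's argument, except the paper needs no split at $h^\ast$ and no suffix sums. It pushes each block of equal coordinates to an endpoint, using that $F(X)=\frac{A_0}{\rho}\binom X2-B_0\log\Gamma(X+1)$ has convex derivative with $F'(1)<0$, so it decreases then increases on all of $[1,n]$. It then checks the vectors with entries in $\{n,z,x_0,1\}$ to get $\Phi\le-c\log n\log\log n$ on all of $\mathcal R$. The crude count $(N+1)^d$ finishes, so your worry about polynomial losses near $h=n$ never arises in this lemma.
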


\begin{proof}
By Lemma~\ref{lem:nonendpoint-gamma-reduction}, it is enough to bound the sum in \eqref{eq:nonendpoint-gamma-sum}. Write
\[
        g_j(x):=\frac{j}{\rho}\binom{x}{2}-\log\Gamma(x+1),
        \qquad
        \Phi(x_1,\ldots,x_d):=\sum_{j=1}^d g_j(x_j).
\]
The summation region is contained in the compact continuous region
\begin{equation}\label{eq:continuous-region}
        \mathcal R:=\{(x_1,\ldots,x_d):
        n\ge x_1\ge\cdots\ge x_d\ge1,
        \ x_1\ge x_0,
        \ x_d\le z\}.
\end{equation}
Here $x_j=h(K_j)$, so $K_j=\binom{x_j}{2}$.

We first show that $\Phi$ can be maximized on a very small boundary set. Suppose a maximal vector has a block of equal coordinates
\[
        x_p=x_{p+1}=\cdots=x_q=X.
\]
While the other coordinates are held fixed, the contribution of this block is
\[
        F(X)=\frac{A_0}{\rho}\binom X2-B_0\log\Gamma(X+1),
        \qquad
        A_0:=\sum_{j=p}^q j,
        \quad B_0:=q-p+1.
\]
Let $\psi=\Gamma'/\Gamma$. Then
\[
        F'(X)=\frac{A_0(2X-1)}{2\rho}-B_0\psi(X+1),
        \qquad
        F''(X)=\frac{A_0}{\rho}-B_0\psi'(X+1),
\]
and $F'''(X)=-B_0\psi''(X+1)>0$ for $X\ge1$. Thus $F''$ is increasing. Moreover,
\[
        F'(1)=\frac{A_0}{2\rho}-B_0\psi(2)<0
\]
for all large $n$, since $\rho\to\infty$ and $\psi(2)=1-\gamma>0$. Hence $F'$ has no zero at which the sign changes from positive to negative; any interior critical point is a local minimum. Therefore the maximum of $F$ on its allowed interval occurs at an endpoint. Pushing every constant block to an endpoint and iterating, we obtain a maximizer whose coordinates all lie in
\begin{equation}\label{eq:boundary-set}
        \{n,z,x_0,1\}.
\end{equation}

It remains to estimate these boundary vectors. We collect the needed asymptotics. For every fixed $j$, Stirling's formula gives
\begin{equation}\label{eq:x0-bound}
        g_j(x_0)
        \le O_m\left(\frac{x_0^2}{\rho}\right)-\log\Gamma(x_0+1)
        \le -\frac12 x_0\log x_0
        \le -c_m^\ast\log n\log\log n,
\end{equation}
whereas $g_j(1)=0$. Also, for $1\le r\le d-1=m-2$, using $\rho\ge R_n$,
\begin{equation}\label{eq:partial-n-negative}
        \sum_{j=1}^r g_j(n)
        =\frac{r(r+1)}2\frac{N}{\rho}-rL
        \le \left(\frac{r(r+1)}m-r\right)L
        =-\frac{r(m-r-1)}mL .
\end{equation}
Next let $H:=\log\Gamma(n+1)-\log\Gamma(z+1)$ and $M:=N-\binom z2$. Then
\begin{equation}\label{eq:H-M-asymptotics}
        H=(1+o(1))(n-z)\log n=(1+o(1))n,
        \qquad
        M=(1+o(1))n(n-z),
\end{equation}
since $n-z=y-2=(1+o(1))n/\log n$. Hence by \eqref{eq:rho-asymptotics-refined} we have
\begin{equation}\label{eq:M-rho-H}
\frac M\rho=\left(\frac4m+o(1)\right)H.
\end{equation}
Moreover,
\begin{equation}\label{eq:z-n-difference}
        g_j(z)-g_j(n)=H-\frac{jM}{\rho}=O_m(n).
\end{equation}

Now take a boundary vector. Because of the ordering and the constraints in \eqref{eq:continuous-region}, it has the form
\[
        \underbrace{n,\ldots,n}_{a\text{ times}},
        \underbrace{z,\ldots,z}_{b\text{ times}},
        \underbrace{x_0,\ldots,x_0}_{c\text{ times}},
        \underbrace{1,\ldots,1}_{e\text{ times}},
\]
where $a+b+c+e=d$, $a\le d-1$, and $a+b+c\ge1$. Put $r:=a+b$. If $r=0$, then $c\ge1$, and \eqref{eq:x0-bound} immediately gives
\[
\Phi\le -c_m^\ast\log n\log\log n.
\]
If $1\le r\le d-1$, then \eqref{eq:partial-n-negative}, \eqref{eq:z-n-difference}, \eqref{eq:x0-bound}, and $g_j(1)=0$ imply
\[
        \Phi
        \le -\kappa_m L+C_m n
        \le -\kappa_m' L
        \le -c_m^\ast\log n\log\log n
\]
for all large $n$. Finally suppose $r=d$. Then $c=e=0$, $b=d-a\ge1$, and
\begin{align*}
\Phi=\sum_{j=1}^d g_j(n)+\sum_{j=a+1}^d\bigl(g_j(z)-g_j(n)\bigr)    \le 0+bH-\frac{M}{\rho}\sum_{j=a+1}^d j,
\end{align*}
where $\sum_{j=1}^d g_j(n)=AN/\rho-dL\le0$ follows from $\rho\ge R_n$. Since $$\frac1b\sum_{j=a+1}^d j=\frac{a+d+1}{2}=\frac{a+m}{2},$$
by \eqref{eq:M-rho-H} we have
\[
bH-\frac{M}{\rho}\sum_{j=a+1}^d j
=-\left(1+\frac{2a}{m}+o(1)\right)bH
\le -\kappa_m n.
\]
This is again at most $-c_m^\ast\log n\log\log n$. Thus every boundary vector satisfies
\begin{equation}\label{eq:boundary-estimate-final}
        \Phi(x_1,\ldots,x_d)
        \le -c_m^\ast\log n\log\log n .
\end{equation}
By the boundary reduction, \eqref{eq:boundary-estimate-final} holds throughout $\mathcal R$.

The number of integer choices for $(K_1,\ldots,K_d)$ is at most $(N+1)^d=\exp(O_m(\log n))$. Combining this with \eqref{eq:boundary-estimate-final}, the Gamma sum in \eqref{eq:nonendpoint-gamma-sum} is at most
\[
\exp(O_m(\log n))\cdot \exp(-c_m^\ast\log n\log\log n)=o(1).
\]
Therefore the non-endpoint contribution is $o(1)$.
\end{proof}

\subsection{Synthesis of the Permutation Sum}\label{subsec:synthesis}

\begin{proof}[Proof of Proposition~\ref{prop:weighted_permutation_sum}]
By Lemma~\ref{lem:rho-normalization-detailed}, we may assume \eqref{eq:rho-endpoint-detailed}. Since $\wt(\pi)\ge0$, every summand in \eqref{eq:target_sum} is at least $1$, and hence $S_n(\rho)\ge1$.

For the upper bound, split according to $K_1(\pi)\le K_0$ or $K_1(\pi)>K_0$. The first part is at most $1+o(1)$ by Lemma~\ref{lem:low-weight-part-detailed}. For the second part, Lemma~\ref{lem:tree-data-overcount-detailed} bounds the contribution by the tree sum \eqref{eq:exact-ft-tree-sum}. The part of this tree sum with all $f_j\ge n-y$ is $o(1)$ by Lemma~\ref{lem:near-endpoint-contribution-detailed}, and the complementary part, where at least one $f_j<n-y$, is $o(1)$ by Lemma~\ref{lem:nonendpoint-contribution-detailed}. Thus
\[
        S_n(\rho)\le 1+o(1).
\]
Together with the lower bound, this gives $S_n(\rho)=1+o(1)$.
\end{proof}

\begin{remark}\label{remark:final}
The improvement over Proposition 2.2 of Alon, Defant, and Kravitz~\cite{ADK} comes from the near-endpoint estimate. A rough summation over the values of \(K_j\) near \(N\) loses a term of order \(1/\log n\) in \(\rho\). Here the Kruskal order is kept at the level of the moved-vertex defects \(s_j=n-f_j\). The strict ordering of the intervals \(\mathcal D_s\) forces \(s_1\le\cdots\le s_d\), and the suffix sums in \eqref{eq:suffix-positive-repaired} are all positive. This turns the endpoint contribution into \(T_\star\cdot O_m(1)=o(1)\), which is precisely what allows the hypothesis \(\rho\ge R_n+\omega(n)/(\log n)^2\).
\end{remark}

\medskip

\section{Concluding Remarks}\label{sec:conclusion}

This paper sharpens the threshold for rainbow stackings from the constant-order window of Alon, Defant, and Kravitz~\cite{ADK} to an $o(1)$ window.
Our theorem gives the exact rounded threshold
\[
\left\lceil \frac{m\binom{n}{2}}{2\log(n!)}+\frac{2m-1}{6}\right\rceil
\]
for a density-one set of integers $n$.  The proof identifies two sources of sharpness: the local dependence between colour conflicts, captured by the second-order chromatic-polynomial expansion of the auxiliary graph $G_\pi$, and the global dependence between layer permutations, controlled through a refined weighted-permutation estimate.

A natural direction is to allow the number of layers to grow.

\begin{question}
Determine the threshold for rainbow stackings when $m=m(n)\to\infty$.
\end{question}

In this regime, the expansion
$        \frac{m\binom{n}{2}}{2\log(n!)}+\frac{2m-1}{6}$
should no longer be treated as the canonical centre once the higher-order terms in the first moment become significant.  A more robust candidate is the exact first-moment root $r_{\mathrm{FM}}(n,m)$ defined by
\[
        (m-1)\log(n!)
        +\binom{n}{2}\sum_{i=1}^{m-1}\log\left(1-\frac{i}{r_{\mathrm{FM}}}\right)=0.
\]
The main challenge is then to prove a matching second-moment bound uniformly in $m$.  Our present method relies on the fact that typical overlaps between independent permutations contribute negligibly to the weighted permutation sum.  When $m=\Theta \big(\frac{n}{\log n}\big),$
this is no longer true: since $\mathbb E[\wt(\pi)]\sim \binom{m}{2}$ and $\frac{m\binom{n}{2}}{2\log(n!)}\sim mn/(4\log n)$, the normalized typical overlap satisfies
\[
\frac{\mathbb E[\wt(\pi)]}{\frac{m\binom{n}{2}}{2\log(n!)}}\sim \frac{2m\log n}{n}.
\]
Thus new ideas seem necessary in and beyond the range $m=\Theta \big(\frac{n}{\log n}\big)$, where the second moment begins to feel the typical, rather than merely exceptional, permutation overlaps.

\medskip

\section*{Acknowledgement}

The authors would like to acknowledge the use of an AI assistant on the technical writing and verification of the mathematical derivations. Specifically, the AI assistant provided valuable support in the detailed analysis of the Gamma function bounds presented in Lemmas~\ref{lem:nonendpoint-gamma-reduction} and \ref{lem:nonendpoint-contribution-detailed}. The core framework of this paper, in particular the application of the cluster expansion for the chromatic polynomial to eliminate the constant-order window of the threshold, was developed entirely by the authors.

H. Liu and Z. Yan were supported by the Institute for Basic Science (IBS-R029-C4).

\end{document}